\newtheorem{thm}{Theorem}[section]
\newtheorem{lem}[thm]{Lemma}
\theoremstyle{definition}
\newtheorem{rem}[thm]{Remark}
\newtheorem{ex}[thm]{Example}
\def\R{\mathbb{R}}
\def\P{\mathbb P}
\def\E{\mathbb E}
\journal{*}
\begin{document}
\selectlanguage{english}
\begin{frontmatter}

\title{New results on the asymptotic behavior of an  SIS epidemiological model with quarantine strategy, stochastic transmission, and Lévy disturbance}
\author{Driss Kiouach,\footnote{Corresponding author.\\
E-mail addresses: \href{d.kiouach@uiz.ac.ma}{d.kiouach@uiz.ac.ma} (D. Kiouach), \href{yassine.sabbar@edu.uiz.ac.ma}{yassine.sabbar@edu.uiz.ac.ma} (Y. Sabbar), \href{azamisalim9@gmail.com}{azamisalim9@gmail.com} (S. El Azami El-idrissi).} Yassine Sabbar and Salim El Azami El-idrissi}
\address{LPAIS Laboratory, Faculty of Sciences Dhar El Mahraz, Sidi Mohamed Ben Abdellah University, Fez, Morocco.}   
\vspace*{1cm}

\begin{abstract}
The spread of infectious diseases is a major challenge in our contemporary world, especially after the recent outbreak of Coronavirus disease 2019 (COVID-19). The quarantine strategy is one of the important intervention measures to control the spread of an epidemic by greatly minimizing the likelihood of contact between infected and susceptible individuals. In this study, we analyze the impact of various stochastic disturbances on the epidemic dynamics during the quarantine period. For this purpose, we present an SIQS epidemic model that incorporates the stochastic transmission and the Lévy noise in order to simulate both small and massive perturbations. Under appropriate conditions, some interesting asymptotic properties are proved, namely: ergodicity, persistence in the mean, and extinction of the disease. The theoretical results show that the dynamics of the perturbed model are determined by parameters that are closely related to the stochastic noises. Our work improves many existing studies in the field of mathematical epidemiology and provides new techniques to predict and analyze the dynamic behavior of epidemics. \vskip 2mm

\textbf{Keywords:} Epidemics; Quarantine; Dynamics; White noise; Lévy jumps; Asymptotic properties.

\textbf{Mathematics Subject Classification:} 92B05; 93E03; 93E15.
\end{abstract}
\end{frontmatter}
\
\nocite{2009ProcDETAp}
\section{Introduction}
The study of infectious diseases has long been a subject where epidemiological issues are combined with financial and social problems \cite{1,2,3,4,5}. The rapid spread of COVID-19 these days shows that humanity stills suffer from epidemics that may lead to the collapse of medical and economic systems. By isolating infected individuals and quarantining the susceptible population at home, many countries have basically controlled the outbreak of COVID-19 \cite{Q1,Q2}. In order to analyze the impact of this strategy on the spread of epidemics and to predict their future behavior, we use different mathematical formulations according to their characteristics \cite{sis6,sis7,sis9}. In this study, we consider an SIQS epidemic model in the form of ordinary differential equations (ODEs for short). These ODEs describe the evolution of susceptible $S(t)$, infected $I(t)$, and isolated $Q(t)$ individuals as time functions. The rates of change and the interactions between different population classes in our case are expressed by the following deterministic model \cite{sis3}:
\begin{align}
\begin{cases}
\textup{d}S(t)=\big(A -\mu_1 S(t)-\beta S(t) I(t)+\gamma I(t)+k Q(t)\big)\textup{d}t,\\
\textup{d}I(t)=\big(\beta I(t) S(t) -(\mu_1+r_2+\delta +\gamma)I(t)\big)\textup{d}t,\\
\textup{d}Q(t)=\big(\delta I(t)-(\mu_1+r_3+k)Q(t)\big)\textup{d}t,
\end{cases}
\label{s1}
\end{align}
where the parameters appearing in this system are described as follows:
\begin{itemize}
\item[$\bullet$] $A$ is the recruitment rate of the susceptible individuals corresponding to new births.
\item[$\bullet$] $\mu_1$ is the natural death rate.
\item[$\bullet$] $\delta$ is the isolation rate.
\item[$\bullet$] $r_2$ is the disease-related mortality rate.
\item[$\bullet$] $r_3$ is the death rate associated with the disease under isolation intervention. For simplicity, we denote $\mu_2=\mu_1+r_2$ and $\mu_3=\mu_1+r_3$ as a general mortality rates.
\item[$\bullet$] $\gamma$ and $k$ are the rates which individuals recover and return to $S$ from $I$ and $Q$, respectively.
\item[$\bullet$] $\beta$ represents the transmission rate. 
\end{itemize} 
\begin{figure}[hbtp]
\begin{center}
\includegraphics[scale=0.35]{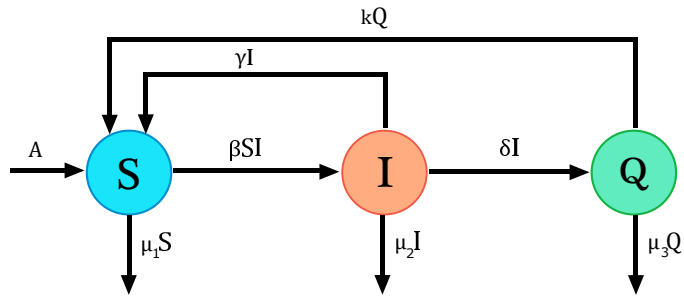} 
\end{center} 
\caption{The transfer diagram for the deterministic SIQS epidemic model (\ref{s1}).}
\label{fig0}
\end{figure}
All parameters are usually assumed to be positive. The schematic flow diagram of the model (\ref{s1}) is illustrated in Figure \ref{fig0}. Herbert et al. \cite{sis3} proved that the basic reproduction number of the deterministic model (\ref{s1}) is expressed by $\mathcal{R}_0=\frac{\beta A}{\mu_1 (\mu_2+\delta+\gamma)}$. This parameter is an essential quantity to predict whether a disease will persist or not. If $\mathcal{R}_0\leq 1$, the model (\ref{s1}) has only the disease-free equilibrium $E^{\circleddash}=(A/\mu_1,0,0)$ which is globally asymptotically stable, and if $\mathcal{R}_0>1$, $E^{\circleddash}$ becomes unstable and there exists a global asymptotically stable endemic equilibrium $E^{\circledast}=(S^{\circledast},I^{\circledast},Q^{\circledast})$, where
\begin{align*}
S^{\circledast}=\frac{A}{\mu_1\mathcal{R}_0},\hspace*{0.5cm}I^{\circledast}=\frac{A(1-1/\mathcal{R}_0)}{\mu_2(1+\delta/(\mu_3+k))}, \hspace*{0.2cm}\text{and}\hspace*{0.2cm} Q^{\circledast}=\frac{\delta I^{\circledast}}{(\mu_3+k)}.
\end{align*}

The spread of infectious diseases can undergo random disturbances and stochastic phenomena due to environmental fluctuations \cite{9,55,54}. Since disturbed models can describe many practical problems very well, many types of stochastic differential equations have been used to analyze various epidemic models in recent years \cite{53,52,51}. There are two common ways to introduce stochastic factors into epidemic systems. The first one is to assume that the transmission of the diseases is subject to some small random fluctuations which can be described by the Gaussian white noise \cite{sis11,sis12,sis13}. The other one is to admit that the model parameters are affected by massive environmental perturbations like the climate changes, earthquakes, hurricanes, floods, etc \cite{56,57}. For a better explaination to these phenomena, the use of a compensated Poisson process into the population dynamics provides an appropriate and more realistic context \cite{sis14}. Considering these two types of random disturbances, many works have analyzed the asymptotic behaviors of various epidemic models, including persistence in the mean, extinction, and ergodicity \cite{58,56,57,sis1,sis2}. These interesting researches have served an important role in the stochastic modeling of epidemics. But, all these models have considered either the standard white Gaussian noise or the Lévy jumps. In this work, we combine these two perturbations by treating an SIQS epidemic model that simultaneously includes the stochastic transmission and the discontinuous Lévy process. This original idea extends the studies presented in \cite{sis2,sis14} and gives us a general view of the disease dynamics under different scenarios of random perturbations. \\

The threshold analysis of perturbed epidemic systems is very important for understanding and controlling of the disease spread. In our case, the deterministic model (\ref{s1}) will be perturbed not only by white noise but also by Lévy jumps, which makes its analysis more complicated and needs some new techniques and methods. During this study, we aim to develop a mathematical approach to prove the existence of a unique ergodic stationary distribution and persistence in the mean of the new perturbed model. Without using the classical Lyapunov method presented in \cite{12}, we obtain sufficient conditions for the ergodicity by employing the Feller property and mutually exclusive possibilities lemma. Under the same conditions, we demonstrate that the persistence in the mean of the disease occurs. To analyze properly our new model, we study the stochastic extinction case. \\

The organization of this paper is as follows: in section \ref{sec2},  we present our new stochastic system and some preliminary results. In section \ref{sec3}, we focus on the stochastic characteristics of the perturbed model. Since the ergodicity is an important statistical characteristic, the existence of a unique stationary distribution is obtained. Almost sufficient condition for the persistence is also established. To complete our
study, we give sufficient conditions for the disease extinction. Finally, we support our theoretical results by illustrating some numerical examples.
\section{The stochastic SIQS model and some preliminaries}\label{sec2}
Let $(\Omega,\mathcal{F},\P)$ be a complete probability space  with a filtration $\{\mathcal{F}_t\}_{t\geq 0}$ satisfying the usual conditions, and containing all the random variables that will be meted in this paper. We merge the stochastic transmission with a discontinuous perturbed mortality rates. The random variability in the epidemic transmission $\beta$ and the mortality rates $\mu_i$ $(i=1,2,3)$ are presented by a decomposition of usual white noise and the Lévy-Itô process, respectively. Under these assumptions, the evolution of an epidemic during the quarantine strategy is modeled by the following system of stochastic differential equations: 
\begin{align}
\begin{cases}
\textup{d}S(t)=\big(A -\mu_1 S(t)-\beta S(t) I(t) +\gamma I(t)+ k Q(t)  \big) \textup{d}t+\mathcal{P}_1(t),\\
\textup{d}I(t)=\big( \beta S(t) I(t) -( \mu_2 +\delta +\gamma)I(t) \big) \textup{d}t+\mathcal{P}_2(t) ,\\
\textup{d}Q(t)=\big( \delta I(t) - (\mu_3 + k) Q(t) \big)\textup{d}t+\mathcal{P}_3(t),
\end{cases}
\label{s2}
\end{align}
where
\begin{align*}
\mathcal{P}_1(t)&=\sigma_1 S(t) \textup{d}\mathcal{W}_1(t)+\int_Z \eta_1(u)S(t^{-})\widetilde{\mathcal{N}}(\textup{d}t,\textup{d}u)- \sigma_{\beta} S(t)I(t) \textup{d}\mathcal{W}_{\beta}(t),\\
\mathcal{P}_2(t)&=\sigma_2 I(t) \textup{d}\mathcal{W}_2(t)+\int_Z \eta_2(u)I(t^{-})\widetilde{\mathcal{N}}(\textup{d}t,\textup{d}u)+ \sigma_{\beta} S(t)I(t) \textup{d}\mathcal{W}_{\beta}(t),\\
\mathcal{P}_3(t)&=\sigma_3 Q(t) \textup{d}\mathcal{W}_3(t)+\int_Z \eta_3(u)Q(t^{-})\widetilde{\mathcal{N}}(\textup{d}t,\textup{d}u).
\end{align*}
Here, $\mathcal{W}_{\beta}(t)$ and $\mathcal{W}_{i}(t)$ $(i = 1, 2,3)$ are the mutually independent Brownian motions defined on $(\Omega,\mathcal{F},\{\mathcal{F}_t\}_{t\geq 0},\P)$ with the positive intensities $\sigma_{\beta}$ and $\sigma_{i}$ $(i = 1, 2,3)$. $S(t^{-})$, $I(t^{-})$ and $Q(t^{-})$ are the left limits of $S(t)$, $I(t)$ and $Q(t)$, respectively. $\mathcal{N}$ is a Poisson counting measure with compensating  martingale $\widetilde{\mathcal{N}}$ and characteristic measure $\nu$ on a measurable subset $Z$ of $(0,\infty)$ satisfying $\nu(Z)<\infty$. It assumed that $\nu$ is a Lévy measure such that $\widetilde{\mathcal{N}}(\textup{d}t,\textup{d}u)=\mathcal{N}(\textup{d}t,\textup{d}u)-\nu(\textup{d}u)\textup{d}t$. We also assume that $\mathcal{W}_{i}(t)$ $(i = 1, 2,3,\beta)$ are independent of $\mathcal{N}$. The bounded functions $\eta_i:\; Z\times\Omega\to\R$ $(i = 1, 2,3)$ are $\mathfrak{B}(Z)\times\mathcal{F}_t$-measurable and  continuous with respect to $\nu$.\\
For the sake of notational simplicity, we define
\vspace{0.1cm}
\begin{itemize}
\item[$\bullet$] $\bar{\sigma}=\max\{\sigma_1^2,\sigma_2^2,\sigma_3^2\}$.
\item[$\bullet$] $\bar{\eta}(u)=\max\{\eta_1(u),\eta_2(u),\eta_3(u)\}$.
\item[$\bullet$] $\underline{\eta}(u)=\min\{\eta_1(u),\eta_2(u),\eta_3(u)\}$.
\item[$\bullet$] $\hat{\rho}_{n,p}(u)=\big[1+\bar{\eta}(u)\big]^{np}-1-np\bar{\eta}(u)$.
\item[$\bullet$] $\check{\rho}_{n,p}(u)=\big[1+\underline{\eta}(u)\big]^{np}-1-np\underline{\eta}(u)$.
\item[$\bullet$] $\ell_{n,p}=\int_Z\big[\hat{\rho}_{n,p}(u)\vee\check{\rho}_{n,p}(u)\big]\nu(\textup{d}u)$.
\end{itemize}
\vspace{0.2cm}
To properly study our model (\ref{s2}), we have the following fundamental assumptions on the jump-diffusion coefficients:
\vspace*{5pt}
\begin{itemize}
\item[$\bullet$] \textbf{($\textup{A}_1$)} We assume that the jump coefficients $\eta_i(u)$ in (\ref{s2}) satisfy $\int_Z \eta_i^2(u)\nu(\textup{d}u)<\infty$, $\{i=1,2,3\}$.
\item[$\bullet$] \textbf{($\textup{A}_2$)} For all $u\in Z$, we assume that $1+\eta_i(u)>0$ and $\int_Z\big[\eta_i(u)-\ln (1+\eta_i(u))\big]\nu(\textup{d}u)<\infty,$ $\{i=1,2,3\}$.
\item[$\bullet$] \textbf{($\textup{A}_3$)} We suppose that $\int_Z \big[\ln(1+\eta_i(u))\big]^2\nu(\textup{d}u)<\infty,$ $\{i=1,2,3\}$.
\item[$\bullet$] \textbf{($\textup{A}_4$)} We suppose that $\int_Z \big[\big(1+\bar{\eta}(u)\big)^{2}-1\big]^2\nu(\textup{d}u)<\infty$.
\item[$\bullet$] \textbf{($\textup{A}_5$)} We suppose that for each positive integer $n$ there is some real number $p>1$ for which $$\Gamma_{n,p}=\mu_1-\frac{(np-1)}{2}\bar{\sigma}-\frac{1}{np}\ell_{n,p}> 0.$$ 
\end{itemize}
\vspace*{-4pt}
\par In view of the biological interpretation, the question of whether the stochastic model is well-posed is the first concern. Therefore, to analyze the stochastic model (\ref{s2}), it is necessary to verify the existence of a unique global positive solution, that is, there is no explosion in finite time for any positive initial value $(S(0), I(0), Q(0))\in \R^{3}_{+}$. The following lemma assures the well-posedness of the stochastic model (\ref{s2}).
\begin{lem}
Let assumptions \textbf{($\textup{A}_1$)} and \textbf{($\textup{A}_2$)} hold. For any initial value $Y(0)=(S(0),I(0),Q(0))\in \R^3_{+}$, there exists a unique positive solution $Y(t)=(S(t),I(t),Q(t))$ of system (\ref{s2}) on $t\geq 0$, and this solution will stay in $\R^{3}_{+}$ almost surely. 
\label{thmp}
\end{lem}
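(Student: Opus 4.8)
The plan is to follow the classical localization argument for stochastic differential equations, adapted to the jump--diffusion setting. First I would observe that the drift, diffusion and jump coefficients of (\ref{s2}) are locally Lipschitz on $\R^3_+$ (they are polynomial in $(S,I,Q)$, and $\int_Z\eta_i^2(u)\,\nu(\textup{d}u)<\infty$ by \textbf{($\textup{A}_1$)} makes the compensated Poisson integral admissible), so the standard existence--uniqueness theorem for SDEs driven by Brownian motions and a Poisson random measure provides a unique local solution $Y(t)=(S(t),I(t),Q(t))$ on a maximal interval $[0,\tau_e)$; since \textbf{($\textup{A}_2$)} forces $1+\eta_i(u)>0$, each jump multiplies a coordinate by a strictly positive factor and the solution stays in $\R^3_+$ on $[0,\tau_e)$. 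It remains only to show $\tau_e=\infty$ almost surely. Choose $n_0\in\N$ with $1/n_0<\min\{S(0),I(0),Q(0)\}\le\max\{S(0),I(0),Q(0)\}<n_0$ and, for $n\ge n_0$, set
\[
\tau_n=\inf\{t\in[0,\tau_e):\ \min\{S(t),I(t),Q(t)\}\le \tfrac1n\ \text{ or }\ \max\{S(t),I(t),Q(t)\}\ge n\},
\]
a nondecreasing family with limit $\tau_\infty\le\tau_e$; it suffices to prove $\tau_\infty=\infty$ a.s. Arguing by contradiction, suppose there are $T>0$ and $\varepsilon\in(0,1)$ with $\P(\tau_\infty\le T)>\varepsilon$, hence $\P(\tau_n\le T)>\varepsilon$ for all $n\ge n_0$.

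The decisive structural remark is that the bilinear transmission noise $\sigma_\beta SI$ enters the $S$- and $I$-equations with opposite signs and so cancels in the dynamics of $N:=S+I+Q$, which then solves an SDE with linear--growth coefficients. Accordingly I would use the $C^2$ function
\[
V(S,I,Q)=\frac{(S+I+Q)^2}{2}-\ln S-\ln I-\ln Q ,
\]
which satisfies $V\ge\tfrac12(N-1)^2+\tfrac52\ge0$, $N^2\le4V$, and $V(Y)\to\infty$ whenever a coordinate tends to $0$ or to $+\infty$; the more familiar choice $(S-1-\ln S)+(I-1-\ln I)+(Q-1-\ln Q)$ does not suffice here, since the bilinear noise creates quadratic terms $\tfrac12\sigma_\beta^2(S^2+I^2)$ in the generator that such a function cannot absorb. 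Writing $\mathcal L$ for the generator of (\ref{s2}) and applying the It\^o--L\'evy formula: the $\tfrac12N^2$-part contributes $N(A-\mu_1S-\mu_2I-\mu_3Q)+\tfrac12(\sigma_1^2S^2+\sigma_2^2I^2+\sigma_3^2Q^2)+\tfrac12\int_Z(\eta_1S+\eta_2I+\eta_3Q)^2\nu(\textup{d}u)$, where the integral is finite by \textbf{($\textup{A}_1$)} and bounded by a constant times $N^2$, while the negative feedback $-N(\mu_1S+\mu_2I+\mu_3Q)$ dominates the quadratic terms up to a constant times $N^2$; the $-\ln S,-\ln I,-\ln Q$-parts contribute only constants, nonpositive terms such as $-A/S$, the mutually cancelling drift terms $\mp\beta SI$, the quadratic by-products $\tfrac12\sigma_\beta^2I^2$ and $\tfrac12\sigma_\beta^2S^2$, and the jump integrals $\int_Z[\eta_i(u)-\ln(1+\eta_i(u))]\nu(\textup{d}u)$, all finite by \textbf{($\textup{A}_2$)}. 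Collecting these estimates and using $S,I,Q\le N$ together with $N^2\le4V$ yields a differential inequality $\mathcal L V(Y)\le K\bigl(1+V(Y)\bigr)$ for some constant $K>0$ depending only on the parameters and the finite integrals in \textbf{($\textup{A}_1$)} and \textbf{($\textup{A}_2$)}.

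From here the argument is routine. Applying It\^o's formula to $V(Y(t\wedge\tau_n))$, taking expectations (the stochastic integrals are true martingales up to the bounded stopping time $T\wedge\tau_n$ because $V$ and its derivatives are bounded on $(1/n,n)^3$), and invoking Gronwall's inequality gives $\E[V(Y(T\wedge\tau_n))]\le\bigl(V(Y(0))+KT\bigr)e^{KT}=:M_T$, a bound independent of $n$. On $\{\tau_n\le T\}$ the stopped state $Y(\tau_n)$ has a coordinate $\le\tfrac1n$ or a coordinate $\ge n$, so $V(Y(\tau_n))\ge\min\{\ln n,\ \tfrac12(n-1)^2\}=\ln n$ for all large $n$; since $V\ge0$ it follows that $M_T\ge\E\bigl[\mathbf 1_{\{\tau_n\le T\}}V(Y(T\wedge\tau_n))\bigr]\ge\varepsilon\ln n$ for every $n\ge n_0$, and letting $n\to\infty$ produces the contradiction $M_T=\infty$. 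Hence $\tau_\infty=\infty$ a.s., so $\tau_e=\infty$, and the unique solution is global and remains in $\R^3_+$ almost surely.

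The main obstacle, and the only place where the L\'evy structure genuinely intervenes, is the It\^o--L\'evy computation of $\mathcal L V$: one must verify that the jumps of the logarithmic terms reorganize precisely into the quantities $\int_Z[\eta_i(u)-\ln(1+\eta_i(u))]\nu(\textup{d}u)$ made integrable by \textbf{($\textup{A}_2$)}, that the jump contribution to $N^2$ is tamed by the square-integrability in \textbf{($\textup{A}_1$)}, and --- the reason a quadratic term must be built into $V$ --- that the transmission noise $\sigma_\beta SI$ cancels out of $N$, so that the quadratic by-products it generates through the logarithms get absorbed into $K(1+V)$. Everything else parallels the classical diffusion argument.
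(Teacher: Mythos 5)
Your proposal is correct, and it is essentially the argument the paper itself points to: the paper omits the proof of this lemma as "standard and classic" (citing its references), and what you have written is exactly that standard localization/Lyapunov scheme for jump--diffusions, so there is no divergence in route to report. The one genuinely model-specific point — that the bilinear noise $\sigma_\beta SI$ cancels in $N=S+I+Q$ but reappears as $\tfrac12\sigma_\beta^2 I^2$ and $\tfrac12\sigma_\beta^2 S^2$ through the logarithmic terms, so that the classical choice $\sum (x-1-\ln x)$ fails and a superlinear term such as $N^2/2$ must be built into $V$ — is correctly identified and correctly handled, and the jump contributions are tied to the right hypotheses ($\int_Z\eta_i^2\,\nu(\textup{d}u)<\infty$ for the quadratic part, $\int_Z[\eta_i-\ln(1+\eta_i)]\,\nu(\textup{d}u)<\infty$ and $1+\eta_i>0$ for the logarithmic part). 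Two cosmetic remarks: the bound $N^2\le 4V$ is not exact (one only gets $N^2\le C(1+V)$, e.g. $N^2\le 4V+2$ from $V\ge\tfrac12(N-1)^2+\tfrac52$), which changes nothing since you only need $\mathcal{L}V\le K(1+V)$; and the early claim that positivity on $[0,\tau_e)$ follows from the jump structure alone is a little quick for the continuous part, but your stopping times $\tau_n$ already police the lower boundary, so the contradiction argument covers exit to the boundary as well as explosion.
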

The proof is somehow standard and classic (see for example \cite{16,166}), so we omit it here. \\[5pt]
In the following, we always presume that the assumptions \textbf{$(\mbox{A}_1)$} - \textbf{$(\mbox{A}_5)$} hold. For reference purposes, we will prepare several useful lemmas.
\begin{lem}
Let $n$ be a positive integer and let $Y(t)$ denotes the solution of system (\ref{s2}) that starts from a given point $Y(0)\in\R^3_+$. Then, for any $p>1$ that satisfies $\Gamma_{n,p}>0$, we have 
\begin{enumerate}
\item[$\bullet$] $\displaystyle{\E\big[N^{np}(t)\big]\leq N^{np}(0)\times e^{-\frac{np\Gamma_{n,p}}{2}t}+ \frac{2\Delta}{\Gamma_{n,p}}}$.
\item[$\bullet$] $\displaystyle{\underset{t\to \infty}{\limsup} \frac{1}{t}\int^t_0\E\big[N^{np}(s)\big]\textup{d}s\leq \frac{2\Delta}{\Gamma_{n,p}}}$\;\; a.s.
\end{enumerate}
where $\Delta=\underset{N>0}{\sup}\{AN^{{np}-1}-\frac{ \Gamma_{n,p}}{2}N^{np}\}$ and $N(t)=S(t)+I(t)+Q(t)$.
\label{L1}
\end{lem}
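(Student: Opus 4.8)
The plan is to work with the aggregated variable $N(t)=S(t)+I(t)+Q(t)$. Summing the three equations of (\ref{s2}), the bilinear incidence term $\beta SI$ and the two $\sigma_\beta$-noise terms cancel, leaving the closed equation
\begin{align*}
\textup{d}N(t)={}&\big(A-\mu_1 S(t)-\mu_2 I(t)-\mu_3 Q(t)\big)\textup{d}t+\sigma_1 S(t)\,\textup{d}\mathcal{W}_1(t)+\sigma_2 I(t)\,\textup{d}\mathcal{W}_2(t)+\sigma_3 Q(t)\,\textup{d}\mathcal{W}_3(t)\\
&{}+\int_Z\big(\eta_1(u)S(t^-)+\eta_2(u)I(t^-)+\eta_3(u)Q(t^-)\big)\widetilde{\mathcal{N}}(\textup{d}t,\textup{d}u).
\end{align*}
Because $\mu_1\le\mu_2$ and $\mu_1\le\mu_3$ and $S,I,Q\ge0$, the drift is bounded above by $A-\mu_1 N(t)$, which is the only structural fact about the model that will be needed.

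First I would apply the Itô formula for jump--diffusions to $V(N)=N^{np}$ and bound its generator term by term. The first-order part is $npN^{np-1}(A-\mu_1 S-\mu_2 I-\mu_3 Q)\le npAN^{np-1}-np\mu_1 N^{np}$; the Brownian second-order part is $\tfrac12 np(np-1)N^{np-2}(\sigma_1^2 S^2+\sigma_2^2 I^2+\sigma_3^2 Q^2)\le\tfrac12 np(np-1)\bar{\sigma}N^{np}$, using $S^2+I^2+Q^2\le N^2$. For the jump part, at a jump the increment $\eta_1(u)S+\eta_2(u)I+\eta_3(u)Q$ equals $a(u)N$ for some $a(u)\in[\underline{\eta}(u),\bar{\eta}(u)]$ (a convex combination of the $\eta_i(u)$ with weights $S/N,I/N,Q/N$), so the compensated jump contribution to $\mathcal{L}V$ is $N^{np}\int_Z\big[(1+a(u))^{np}-1-np\,a(u)\big]\nu(\textup{d}u)$. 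The function $a\mapsto(1+a)^{np}-1-np\,a$ is convex on $[\underline{\eta}(u),\bar{\eta}(u)]$ (its second derivative is $np(np-1)(1+a)^{np-2}\ge0$, with $1+a\ge1+\underline{\eta}(u)>0$ by \textbf{($\textup{A}_2$)}), hence it is dominated by the larger of its two endpoint values, namely $\hat{\rho}_{n,p}(u)\vee\check{\rho}_{n,p}(u)$; since the $\eta_i$ are bounded and $\nu(Z)<\infty$ the constant $\ell_{n,p}$ is finite and the jump part is $\le\ell_{n,p}N^{np}$. Adding the three bounds and recalling the definition of $\Gamma_{n,p}$,
$$
\mathcal{L}V(N)\le np\big(AN^{np-1}-\Gamma_{n,p}N^{np}\big)\le np\Delta-\frac{np\,\Gamma_{n,p}}{2}N^{np},
$$
where in the last step one subtracts and adds $\tfrac{np\Gamma_{n,p}}{2}N^{np}$ and uses the definition of $\Delta$, which is finite since $np-1>0$ and $\Gamma_{n,p}>0$ make $N\mapsto AN^{np-1}-\tfrac{\Gamma_{n,p}}{2}N^{np}$ continuous on $(0,\infty)$ with limit $-\infty$.

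To turn this differential inequality into the stated bounds I would localize: with $\tau_m=\inf\{t\ge0:N(t)\ge m\}$, Dynkin's formula applied to $V(N(t\wedge\tau_m))$ (the stochastic integrals being genuine martingales up to $\tau_m$ since $N$ is then bounded) gives $\E[N^{np}(t\wedge\tau_m)]\le N^{np}(0)+\E\int_0^{t\wedge\tau_m}\big(np\Delta-\tfrac{np\Gamma_{n,p}}{2}N^{np}(s)\big)\textup{d}s$; letting $m\to\infty$ via Fatou's lemma and monotone convergence yields $\frac{\textup{d}}{\textup{d}t}\E[N^{np}(t)]\le np\Delta-\tfrac{np\Gamma_{n,p}}{2}\E[N^{np}(t)]$ in integrated form, and a comparison with the linear ODE $\dot y=np\Delta-\tfrac{np\Gamma_{n,p}}{2}y$ (Gronwall) gives the first assertion $\E[N^{np}(t)]\le N^{np}(0)e^{-np\Gamma_{n,p}t/2}+2\Delta/\Gamma_{n,p}$. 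For the second assertion I would integrate the same inequality over $[0,t]$, rearrange to $\tfrac{np\Gamma_{n,p}}{2}\int_0^t\E[N^{np}(s)]\,\textup{d}s\le np\Delta\,t+N^{np}(0)-\E[N^{np}(t)]\le np\Delta\,t+N^{np}(0)$, divide by $t$ and let $t\to\infty$.

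I expect the only real subtlety to be the rigorous treatment of the noise integrals: a priori they are merely local martingales, so the stopping-time-plus-Fatou argument above is required rather than a naive expectation of Itô's formula. The convexity observation for the jump increment is the one genuinely new computational ingredient, the rest being a routine moment estimate, and the finiteness of $\ell_{n,p}$ and $\Delta$ is guaranteed by the standing assumptions \textbf{($\textup{A}_1$)}--\textbf{($\textup{A}_5$)}.
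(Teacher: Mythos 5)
Your proposal is correct and follows essentially the same route as the paper: Itô's formula applied to $N^{np}$, the same drift and diffusion bounds, and the same endpoint estimate $\hat{\rho}_{n,p}(u)\vee\check{\rho}_{n,p}(u)$ for the jump term (you justify it by convexity of $x\mapsto(1+x)^{np}-1-npx$, the paper by its monotonicity on $(-1,0)$ and $[0,\infty)$), leading to the same inequality $\textup{d}N^{np}\le np\big(\Delta-\tfrac{\Gamma_{n,p}}{2}N^{np}\big)\textup{d}t$ plus martingale terms. The remaining differences are technical packaging only: the paper multiplies by the integrating factor $e^{\frac{np\Gamma_{n,p}}{2}t}$ and takes expectations of the stochastic integrals directly, while you localize with stopping times, apply Dynkin/Fatou, and conclude by a Gronwall-type comparison, which is a slightly more careful rendering of the same estimate.
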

\begin{proof}
Making use of Itô's lemma \cite{lev1} to $N^{np}(t)$, we obtain
\begin{align*}
\textup{d}N^{np}(t)&=\bigg\{npN^{np-1}(t)\Big(A-\mu_1 N(t)-r_2I(t)-r_3Q(t)\Big)+\dfrac{np}{2}(np-1) N^{np-2}(t)\Big(\sigma_1^2S^2(t)+\sigma_2^2I^2(t)+\sigma_3^2Q^2(t)\Big)\\
&\;\;\;+\int_ZN^{np}(t)\bigg[\bigg(1+\eta_1(u)\frac{S(t)}{N(t)}+\eta_2(u)\frac{I(t)}{N(t)}+\eta_3(u)\frac{Q(t)}{N(t)}\bigg)^{np}-1\\
&\;\;\;-np\bigg(\eta_1(u)\frac{S(t)}{N(t)}+\eta_2(u)\frac{I(t)}{N(t)}+\eta_3(u)\frac{Q(t)}{N(t)}\bigg)\bigg]\nu(\textup{d}u)\bigg\}\textup{d}t\\
&\;\;\;+npN^{np-1}(t)\Big(\sigma_1 S(t)\textup{d}\mathcal{W}_1(t)+\sigma_2 I(t)\textup{d}\mathcal{W}_2(t)+\sigma_3 Q(t)\textup{d}\mathcal{W}_3(t)\Big)\\
&\;\;\;+\int_Z N^{np}(t^{-})\bigg[\bigg(1+\eta_1(u)\frac{S(t^{-})}{N(t^{-})}+\eta_2(u)\frac{I(t^{-})}{N(t^{-})}+\eta_3(u)\frac{Q(t^{-})}{N(t^{-})}\bigg)^{np}-1\bigg]\widetilde{\mathcal{N}}(\textup{d}t,\textup{d}u).
\end{align*}
Then
\begin{align}
\textup{d}N^{np}(t)&\leq\bigg(npN^{np-1}(t)\Big(A-\mu_1 N(t)\Big)+\dfrac{np}{2}(np-1)N^{np}(t)\bar{\sigma}+N^{np}(t)\int_Z\big[\hat{\rho}_{n,p}(u)\vee\check{\rho}_{n,p}(u)\big]\nu(\textup{d}u)\bigg)\textup{d}t\nonumber\\
&\;\;\;+npN^{np-1}(t)\Big(\sigma_1 S(t)\textup{d}\mathcal{W}_1(t)+\sigma_2 I(t)\textup{d}\mathcal{W}_2(t)+\sigma_3 Q(t)\textup{d}\mathcal{W}_3(t)\Big)\nonumber\\
&\;\;\;+\int_Z N^{np}(t^{-})\Big((1+\bar{\eta}(u))^{np}-1\Big)\widetilde{\mathcal{N}}(\textup{d}t,\textup{d}u).
\label{maj}
\end{align}
Rewriting the above inequality, one can see that
\begin{align*}
\textup{d}N^{np}(t)&\leq np\Bigg\{AN^{np-1}(t)-\bigg(\mu_1-\frac{(np-1)}{2}\bar{\sigma}-\frac{1}{np}\int_Z\big[\hat{\rho}_{n,p}(u)\vee\check{\rho}_{n,p}(u)\big]\nu(\textup{d}u)\bigg)N^{np}(t)\Bigg\}\textup{d}t\\
&\;\;\;+npN^{np-1}(t)\Big(\sigma_1 S(t)\textup{d}\mathcal{W}_1(t)+\sigma_2 I(t)\textup{d}\mathcal{W}_2(t)+\sigma_3 Q(t)\textup{d}\mathcal{W}_3(t)\Big)\\&\;\;\;+\int_Z N^{np}(t^{-})\Big((1+\bar{\eta}(u))^{np}-1\Big)\widetilde{\mathcal{N}}(\textup{d}t,\textup{d}u).
\end{align*}
We choose neatly $p>1$ such that $\displaystyle{\Gamma_{n,p}=\mu_1-\frac{(np-1)}{2}\bar{\sigma}-\frac{1}{np}\int_Z\big[\hat{\rho}_{n,p}(u)\vee\check{\rho}_{n,p}(u)\big]\nu(\textup{d}u)> 0}$. Therefore 
\begin{align*}
\textup{d}N^{np}(t)&\leq np\Big\{\Delta-\frac{\Gamma_{n,p}}{2}N^{np}(t)\Big\}\textup{d}t+npN^{np-1}(t)\Big(\sigma_1 S(t)\textup{d}\mathcal{W}_1(t)+\sigma_2 I(t)\textup{d}\mathcal{W}_2(t)+\sigma_3 Q(t)\textup{d}\mathcal{W}_3(t)\Big)\\
&\;\;\;+\int_Z N^{np}(t^{-})\Big((1+\bar{\eta}(u))^{np}-1\Big)\widetilde{\mathcal{N}}(\textup{d}t,\textup{d}u).
\end{align*}
On the other hand, we have
\begin{align*}
\text{d} N^{np}(t)\times e^{\frac{np\Gamma_{n,p}}{2}t }&=p\Gamma_{n,p}~N^{np}(t)\times e^{\frac{np\Gamma_{n,p}}{2}t }+e^{\frac{np\Gamma_{n,p}}{2}t }\textup{d}N^{np}(t)\\
&\leq np\Delta e^{\frac{np\Gamma_{n,p}}{2}t }+e^{\frac{np\Gamma_{n,p}}{2}t }\bigg[npN^{np-1}(t)\Big(\sigma_1 S(t)\textup{d}\mathcal{W}_1(t)+\sigma_2 I(t)\textup{d}\mathcal{W}_2(t)+\sigma_3 Q(t)\textup{d}\mathcal{W}_3(t)\Big)\\&\;\;\;+\int_Z N^{np}(t^{-})\Big((1+\bar{\eta}(u))^{np}-1\Big)\widetilde{\mathcal{N}}(\textup{d}t,\textup{d}u)\bigg].
\end{align*}
Then, by taking the integration and the expectations, we get
\begin{align*}
\E \big[N^{np}(t)\big]&\leq N^{np}(0)\times e^{-\frac{np\Gamma_{n,p}}{2}t }+np\Delta\int^t_0 e^{-\frac{np}{2}\Gamma_{n,p}(t-s)}\textup{d}s\leq N^{np}(0)e^{-\frac{np\Gamma_{n,p}}{2}t }+\frac{2\Delta}{\Gamma_{n,p}}.
\end{align*}
Obviously, we obtain
\begin{align*}
\underset{t\to \infty}{\lim \sup} \frac{1}{t}\int^t_0\E\big[N^{np}(s)\big]\textup{d}s\leq N^{np}(0)\times\underset{t\to \infty}{\lim \sup} \frac{1}{t}\int^t_0e^{-\frac{np\Gamma_{n,p}}{2}s}\textup{d}s+\frac{2\Delta}{\Gamma_{n,p}}=\frac{2\Delta}{\Gamma_{n,p}}.
\end{align*}
This completes the proof.
\end{proof}
\begin{rem}
Throughout this remark, $\tilde{X}$ is standing for the sum $\eta_1(u)S+\eta_2(u)I+\eta_3(u)Q$, where $u\in Z$. In the study of stochastic biological models driven by Lévy jumps (see for example, \cite{57,166,lev1,lev2,lev3}), the following quantity
\begin{align*}
\int_ZN^{np}\left[\Big(1+\frac{\tilde{X}}{N}\Big)^{np}-1-np\frac{\tilde{X}}{N}\right]\nu(\textup{d}u),
\end{align*}
is widely majorazed by
\begin{align*}
\int_ZN^{np}\Big((1+\bar{\eta}(u))^{np}-1-np\underline{\eta}(u)\Big)\nu(\textup{d}u).
\end{align*}
However, the last estimation can be ameliorated by considering the following inequality
\begin{align}
\int_ZN^{np}\left[\Big(1+\frac{\tilde{X}}{N}\Big)^{np}-1-np\frac{\tilde{X}}{N}\right]\nu(\textup{d}u)\leq \int_ZN^{np}\big[\hat{\rho}_{n,p}(u)\vee\check{\rho}_{n,p}(u)\big]\nu(\textup{d}u),
\label{new9}
\end{align}
which is established from the observation that the function 
\begin{align*}
g(x)=(1+x)^{np}-1-npx,\hspace{0.3cm}n,p\geq 1,
\end{align*}
is decreasing for $x\in(-1,0)$ and increasing for $x \geq 0$. Needless to say, this last fact  makes necessarily $g(a)\vee g(b)$ as the highest value of $g$ on any interval $[a,b]\subset(-1,\infty)$. The adoption of the inequality \eqref{new9} in our calculus, especially in (\ref{maj}), \eqref{est9} and \eqref{maj2}, will improve many classical results presented in the above mentioned papers.
\label{remm}
\end{rem}
\begin{rem}
Lemma \ref{L1} takes into consideration the stochastic transmission and the effect of Lévy jumps, and this makes it clearly an extended version of Lemma 2.3 presented in \cite{sharp}.  
\end{rem}
\begin{lem}
Consider the initial value problem
\begin{align}
\begin{cases}
\textup{d}X(t)=\big(A-\mu_1 X(t)\big)\textup{d}t+\bar{\mathcal{P}}_1(t)+\bar{\mathcal{P}}_2(t)+\mathcal{P}_3(t),\\
 X(0)=N(0)\in\mathbb{R}_+ ,
\end{cases}
\label{s4}
\end{align}  
where
\begin{align*}
\bar{\mathcal{P}}_1(t)&=\sigma_1 S(t) \textup{d}\mathcal{W}_1(t)+\int_Z \eta_1(u)S(t^{-})\widetilde{\mathcal{N}}(\textup{d}t,\textup{d}u),\\
\bar{\mathcal{P}}_2(t)&=\sigma_2 I(t) \textup{d}\mathcal{W}_2(t)+\int_Z \eta_2(u)I(t^{-})\widetilde{\mathcal{N}}(\textup{d}t,\textup{d}u).
\end{align*}
Let us denote by $X(t)$ and $Y(t)$ the positive solutions of systems (\ref{s2}) and (\ref{s4}) respectively. Then
\begin{enumerate}
\item[$\bullet$] $\displaystyle{\underset{t\to\infty}{\lim}\frac{X^n(t)}{t}=0 \hspace{0.2cm}\mbox{a.s.}},\quad \forall n\in \{1,2,\cdots\}.$
\item[$\bullet$] $\displaystyle{\underset{t\to\infty}{\lim}\frac{\int^t_0 X(s)S(s)\textup{d}\mathcal{W}_1(s)}{t}=0, \hspace{0.2cm} \underset{t\to\infty}{\lim}\frac{\int^t_0 X(s)I(s)\textup{d}\mathcal{W}_2(s)}{t}=0,  \hspace{0.2cm}\mbox{and}\hspace{0.2cm} \underset{t\to\infty}{\lim}\frac{\int^t_0 X(s)Q(s)\textup{d}\mathcal{W}_3(s)}{t}=0 \hspace{0.3cm}\mbox{a.s.}}$
\item[$\bullet$] $\displaystyle{\underset{t\to\infty}{\lim}\frac{\int^t_0\int_Z\big((1+\bar{\eta}(u))^2-1\big)X^2(s^{-})\widetilde{\mathcal{N}}(\textup{d}s,\textup{d}u)}{ t}=0 \hspace{0.3cm}\mbox{a.s.}}$
\end{enumerate}
\label{lem22}
\end{lem}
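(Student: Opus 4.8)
The plan is to reduce all three assertions to a single ingredient — a uniform-in-$t$ bound on the moments of $X(t)$ — and a strong law of large numbers for martingales. First I would observe that subtracting the equation for $N(t)=S(t)+I(t)+Q(t)$ (obtained by summing the three lines of (\ref{s2}); the $\sigma_\beta$-terms cancel) from (\ref{s4}) shows that $X(t)-N(t)$ solves the pathwise linear ODE $\textup{d}(X-N)=\big(-\mu_1(X-N)+r_2I+r_3Q\big)\textup{d}t$ with vanishing initial datum, so $X(t)=N(t)+\int_0^t e^{-\mu_1(t-s)}\big(r_2I(s)+r_3Q(s)\big)\textup{d}s\geq N(t)>0$; in particular $S(t),I(t),Q(t)\leq N(t)\leq X(t)$, the comparison that lets me bound every integrand below by a power of $X$. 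Then I would apply Itô's formula to $X^{np}(t)$ and repeat verbatim the estimate of Lemma \ref{L1}: the drift contributes $npX^{np-1}(A-\mu_1X)$, the diffusion part is $\leq\tfrac{np(np-1)}{2}\bar{\sigma}X^{np}$ (using $\sigma_1^2S^2+\sigma_2^2I^2+\sigma_3^2Q^2\leq\bar{\sigma}N^2\leq\bar{\sigma}X^2$), and the jump compensator is $\leq\ell_{n,p}X^{np}$ exactly as in Remark \ref{remm} — the only change being that the stochastic weights $S/X,I/X,Q/X$ now sum to $N/X\leq1$ rather than $1$, so the ratio $\tilde{X}/X$ (with $\tilde{X}=\eta_1(u)S+\eta_2(u)I+\eta_3(u)Q$) lies in $[\underline{\eta}(u)\wedge0,\bar{\eta}(u)\vee0]$, on which the monotonicity of $g$ still gives $g(\tilde{X}/X)\leq\hat{\rho}_{n,p}(u)\vee\check{\rho}_{n,p}(u)$. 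This yields $\textup{d}X^{np}(t)\leq np\big(\Delta-\tfrac{\Gamma_{n,p}}{2}X^{np}(t)\big)\textup{d}t+\textup{d}\mathcal{M}_{n,p}(t)$ for a local martingale $\mathcal{M}_{n,p}$, and the same integrating-factor computation gives $\sup_{t\geq0}\E\big[X^{np}(t)\big]\leq X^{np}(0)+\tfrac{2\Delta}{\Gamma_{n,p}}<\infty$ for every positive integer $n$ and every $p>1$ with $\Gamma_{n,p}>0$.

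Next I would record the tool: if $M$ is a càdlàg martingale with $M(0)=0$ and $\E[\langle M\rangle_t]\leq Ct$ for all $t$, then $M(t)/t\to0$ a.s. (then $\int_0^\infty(1+s)^{-2}\textup{d}\langle M\rangle_s$ has finite expectation, hence is a.s. finite, so $\int_0^t(1+s)^{-1}\textup{d}M(s)$ is an $L^2$-bounded, hence convergent, martingale, and Kronecker's lemma gives the claim). For the first assertion, fix $n$, apply \textbf{($\textup{A}_5$)} at the integer $2n$ to get $q>1$ with $\Gamma_{2n,q}>0$; since $\big((1+x)^{nq}-1\big)^2\geq0$ forces $2\hat{\rho}_{n,q}\leq\hat{\rho}_{2n,q}$ (and likewise with $\check{\rho}$), one also has $\Gamma_{n,q}\geq\Gamma_{2n,q}>0$, so both $\sup_t\E[X^{nq}(t)]$ and $\sup_t\E[X^{2nq}(t)]$ are finite. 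The martingale $\mathcal{M}_{n,q}$ satisfies $\langle\mathcal{M}_{n,q}\rangle_t\leq C\int_0^tX^{2nq}(s)\,\textup{d}s$ — the $\nu$-integrals in the jump part being finite since $\int_Z\big((1+\bar{\eta}(u))^{nq}-1\big)^2\nu(\textup{d}u)=\int_Z\big(\hat{\rho}_{2n,q}(u)-2\hat{\rho}_{n,q}(u)\big)\nu(\textup{d}u)\leq\ell_{2n,q}<\infty$, which is exactly what $\Gamma_{2n,q}>0$ buys, and similarly on the $\underline{\eta}$ side — so $\E[\langle\mathcal{M}_{n,q}\rangle_t]\leq C't$ and $\mathcal{M}_{n,q}(t)/t\to0$ a.s. Integrating the differential inequality, dropping the nonpositive term $-\tfrac{nq\Gamma_{n,q}}{2}\int_0^tX^{nq}$, and dividing by $t$ gives $X^{nq}(t)/t\leq X^{nq}(0)/t+nq\Delta+\mathcal{M}_{n,q}(t)/t\to nq\Delta$, so $X^{nq}(t)=O(t)$ a.s.; dividing by $t^q$ and using $q>1$ yields $\big(X^n(t)/t\big)^q=X^{nq}(t)/t^q=O(t^{1-q})\to0$, i.e. $X^n(t)/t\to0$ a.s.

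For the four limits in (2) and (3), each is of the form $M(t)/t$ with $M$ a martingale vanishing at $0$ whose quadratic variation is dominated by a constant times $\int_0^tX^4(s)\,\textup{d}s$: for the Brownian integrals $\langle M\rangle_t=\int_0^tX^2(s)S^2(s)\,\textup{d}s\leq\int_0^tX^4(s)\,\textup{d}s$ (and likewise with $I^2$, $Q^2$, since $S,I,Q\leq X$), while for the jump integral $\langle M\rangle_t=\big(\int_Z\big((1+\bar{\eta}(u))^2-1\big)^2\nu(\textup{d}u)\big)\int_0^tX^4(s)\,\textup{d}s$, the $\nu$-integral being finite by \textbf{($\textup{A}_4$)}. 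Since $\sup_t\E[X^4(t)]<\infty$ by the first paragraph (take $n=4$ and a $p>1$ from \textbf{($\textup{A}_5$)}), we get $\E[\langle M\rangle_t]\leq Ct$, and the martingale SLLN gives $M(t)/t\to0$ a.s. Applying this to each of the four martingales completes the proof.

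I expect the main obstacle to be the first paragraph: checking that the moment estimate of Lemma \ref{L1} genuinely transfers to $X$ (the jump bookkeeping differs because the stochastic convex weights no longer sum to $1$) and that assumptions \textbf{($\textup{A}_1$)}--\textbf{($\textup{A}_5$)} really supply every $\nu$-integral used. A secondary technical point is that \textbf{($\textup{A}_5$)} only guarantees an exponent $p>1$, not $p>2$; this is why the a.s. decay in part (1) must be extracted from the $O(t)$-bound on $X^{nq}$ divided by $t^q$, after passing through the integer $2n$ in \textbf{($\textup{A}_5$)} to control the quadratic variation of $\mathcal{M}_{n,q}$. Everything after the moment bound is soft: the martingale strong law does all the remaining work.
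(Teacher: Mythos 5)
Your proposal is correct, but the way you obtain the almost sure limits is genuinely different from the paper's. Both arguments share the same first stage: apply It\^o's formula to $X^{np}$, bound the diffusion term by $\bar{\sigma}X^{np}$ and the jump compensator by $\ell_{n,p}X^{np}$ (your justification that the weights $S/X, I/X, Q/X$ sum to $N/X\leq 1$ rather than $1$, so that $\tilde{X}/X$ stays in an interval where $g(\tilde X/X)\leq \hat{\rho}_{n,p}\vee\check{\rho}_{n,p}$, is a point the paper leaves implicit, and your pathwise variation-of-constants proof that $X\geq N$ is cleaner than the paper's appeal to a stochastic comparison theorem), and deduce $\sup_{t\geq 0}\E[X^{np}(t)]<\infty$ — the paper via an $e^{mt}$ integrating factor, you via essentially the same computation. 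After that the routes diverge: the paper follows \cite{166}, using the Burkholder--Davis--Gundy inequality on the intervals $[mz,(m+1)z]$, Chebyshev and Borel--Cantelli to obtain logarithmic growth bounds of the type $\limsup_{t\to\infty}\ln X^{n}(t)/\ln t\leq 1/p$ and $\limsup_{t\to\infty}\ln|\mathcal{I}_i(t)|/\ln t\leq \tfrac12+\tfrac1{\bar p}$, from which the stated limits follow; you instead invoke the strong law of large numbers for locally square-integrable martingales (finiteness of $\E\int_0^\infty(1+s)^{-2}\textup{d}\langle M\rangle_s$, $L^2$-bounded martingale convergence, Kronecker), reducing every assertion to the bound $\E[\langle M\rangle_t]\leq Ct$, which in turn follows from the uniform moment bound. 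Your treatment of the first bullet is the place where the extra care is genuinely needed and you supply it: since the SLLN route requires second moments of the It\^o martingale $\mathcal{M}_{n,q}$, you apply \textbf{($\textup{A}_5$)} at the integer $2n$, note the pointwise inequality $2\hat{\rho}_{n,q}\leq\hat{\rho}_{2n,q}$ (equivalently $((1+x)^{nq}-1)^2\geq0$) to get $\Gamma_{n,q}\geq\Gamma_{2n,q}>0$ and the identity $((1+\bar\eta)^{nq}-1)^2=\hat\rho_{2n,q}-2\hat\rho_{n,q}\leq\hat\rho_{2n,q}\vee\check\rho_{2n,q}$ to control the jump bracket, and then extract $X^n(t)/t\to0$ from the a.s. bound $X^{nq}(t)=O(t)$ divided by $t^{q}$ with $q>1$; all of these steps check out. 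The trade-off: the paper's BDG/Borel--Cantelli argument yields slightly finer pathwise growth-rate information, while your argument is shorter, avoids the interval-by-interval suprema entirely, and makes the dependence on the assumptions \textbf{($\textup{A}_4$)}--\textbf{($\textup{A}_5$)} more transparent.
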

\begin{proof}
Our approach to demonstrate this lemma is mainly adapted from \cite{166}. The proof falls naturally into three steps.\\

\textbf{Step 1.}\;\; Applying the generalized Itô's formula \cite{lev1} to $\mathcal{K}(X)=X^{np}$, where $n$ is a fixed integer number, we derive
\begin{align}
\textup{d}\mathcal{K}(X)&\leq \mathcal{L}\mathcal{K} \textup{d}t+npX^{np-1}\Big(\sigma_1 S\textup{d}\mathcal{W}_1(t)+\sigma_2 I\textup{d}\mathcal{W}_2(t)+\sigma_3 Q\textup{d}\mathcal{W}_3(t)\Big)\nonumber\\&\;\;\;+\int_Z X^{np}(t^{-})\Big((1+\bar{\eta}(u))^{np}-1\Big)\widetilde{\mathcal{N}}(\textup{d}t,\textup{d}u),
\label{est9}
\end{align}
where
\begin{align*}
\mathcal{L}\mathcal{K}&\leq npX^{np-2}\Big[AX-\Big(\mu_1-\frac{(np-1)}{2}\bar{\sigma}-\frac{1}{np}\ell_{n,p}\Big)X^{2}\Big].
\end{align*}
Choose a positive constant $p>1$ such that $\Gamma_{n,p}=\mu_1-\frac{(np-1)}{2}\bar{\sigma}-\frac{1}{np}\ell_{n,p}>0$. Then
\begin{align}
\textup{d}\mathcal{K}(X)&\leq\Big(npX^{np-2}\big(AX-\Gamma_{n,p} X^2\big)\Big)\textup{d}t+npX^{np-1}\Big(\sigma_1 S\textup{d}\mathcal{W}_1(t)+\sigma_2 I\textup{d}\mathcal{W}_2(t)+\sigma_3 Q\textup{d}\mathcal{W}_3(t)\Big)\nonumber\\&\;\;\;+\int_Z X^{np}(t^{-})\Big((1+\bar{\eta}(u))^{np}-1\Big)\widetilde{\mathcal{N}}(\textup{d}t,\textup{d}u).
\label{levy1}
\end{align}
For any constant $m$ satisfying $m\in(0,np\Gamma_{n,p})$, one can see that
\begin{align*}
\textup{d}e^{m s}\mathcal{K}(X(s))&\leq\mathcal{L}\big(e^{mt}\mathcal{K}(X(t))\big)+npe^{mt}X^{np-1}(t)\Big(\sigma_1 S(t)\textup{d}\mathcal{W}_1(t)+\sigma_2 I(t)\textup{d}\mathcal{W}_2(t)+\sigma_3 Q(t)\textup{d}\mathcal{W}_3(t)\Big)\\&\;\;\;+e^{mt}\int_Z X^{np}(t^{-})\Big((1+\bar{\eta}(u))^{np}-1\Big)\widetilde{\mathcal{N}}(\textup{d}t,\textup{d}u).
\end{align*}
Integrating both sides of the last inequality from $0$ to $t$, we get
\begin{align*}
\int^t_0\textup{d}e^{m s}\mathcal{K}(X(s))&\leq \int^t_0 \Big(m e^{m s}\mathcal{K}(X(s))+e^{ms}\mathcal{L}\big(\mathcal{K}(X(s)\big)\Big)\textup{d}s\\&\;\;\;+np\int^t_0 e^{ms}X^{np-1}(s)\Big(\sigma_1 S(s)\textup{d}\mathcal{W}_1(s)+\sigma_2 I(s)\textup{d}\mathcal{W}_2(s)+\sigma_3 Q(s)\textup{d}\mathcal{W}_3(s)\Big)\\&\;\;\;+\int^t_0e^{ms}\int_Z X^{np}(s^{-})\Big((1+\bar{\eta}(u))^{np}-1\Big)\widetilde{\mathcal{N}}(\textup{d}s,\textup{d}u).
\end{align*}
Taking expectation on both sides yields that
\begin{align*}
\E e^{m t}\mathcal{K}(X(t))&\leq \mathcal{K}(X(0))+\E\Big\{\int^t_0 \Big(m e^{m s}\mathcal{K}(X(s))+e^{ms}\mathcal{L}\big(\mathcal{K}(X(s))\Big)\textup{d}s\Big\}.
\end{align*}
In view of (\ref{levy1}), we can see that
\begin{align*}
m e^{m t}\mathcal{K}(X(t))+e^{mt}\mathcal{L}\big(\mathcal{K}(X)\big)\leq np e^{m t} \bar{H},
\end{align*}
where $\bar{H}=\underset{X>0}{\sup}\Big\{X^{np-2}\Big[-\Big(\Gamma_{n,p}-\frac{m}{np}\Big)X^2+AX\Big]+1\Big\}$. Then, we have
\begin{align*}
\E e^{m t}\mathcal{K}(X(s))&\leq \mathcal{K}(X(0))+\frac{np\bar{H}}{m} e^{m t}.
\end{align*}
Therefore, we get 
\begin{align*}
\underset{t\to \infty}{\lim \sup}\; \E \big[X^{np}(t)\big]\leq \frac{np\bar{H}}{m}\;\; \mbox{a.s.}
\end{align*}
Consequently, there exists a positive constant $\bar{M}$ such that for all $t\geq 0$, 
\begin{align}
\E\big[X^{np}(t)\big]\leq \bar{M}.
\label{clef}
\end{align}

\textbf{Step 2.}\;\;Integrating from $0$ to $t$ after applying the famous Burkholder-Davis-Gundy inequality \cite{mo} to \eqref{levy1}, allows us to conclude that for an arbitrarily small positive constant $z$, $m=1,2,...$, 
\begin{align*}
\E\Big[\underset{m z\leq t\leq(m+1)z}{\sup}\;X^{np}(t)\Big]&\leq \E\Big[X(mz)\Big]^{np}+\Big(z_1z+z_2 z^\frac{1}{2}\big(np \bar{\sigma}+\int_Z\big((1+\bar{\eta}(u))^{np}-1\big)^2\nu(\textup{d}u)\big)\Big)\\&\;\;\;\times \Big[\underset{m z\leq t\leq(m+1)z}{\sup}\;X^{np}(t)\Big],
\end{align*}
where $z_1$ and $z_2$ are positive constants. Specially, we select $z>0$ such that
\begin{align*}
z_1z+z_2 z^\frac{1}{2}\Big(np \bar{\sigma}+\int_Z\big((1+\bar{\eta}(u))^{np}-1\big)^2\nu(\textup{d}u)\Big)\leq \frac{1}{2}.
\end{align*}
Then
\begin{align*}
\E\big[\underset{m z\leq t\leq(m+1)z}{\sup}\;X^{np}(t)\big]\leq 2\bar{M}.
\end{align*}
Let $\bar{\epsilon}>0$ be arbitrary. By employing Chebyshev's inequality, we derive
\begin{align*}
\P\Big\{\underset{m z\leq t\leq(m+1)z}{\sup} X^{np}(t)>(mz)^{1+\bar{\epsilon}}\Big\}\leq \frac{\E\Big[\underset{m z\leq t\leq(m+1)z}{\sup}\;X^{np}(t)\Big]}{(mz)^{1+\bar{\epsilon}}}\leq \frac{2\bar{M}}{(mz)^{1+\bar{\epsilon}}}.
\end{align*}
Making use of the Borel-Cantelli lemma gives that for almost all $\omega\in\Omega$
\begin{align}
\underset{m z\leq t\leq(m+1)z}{\sup} X^{np}(t)\leq(mz)^{1+\bar{\epsilon}},
\label{levy2}
\end{align}
verifies for all but finitely many $m$. Consequently, there exists a positive constant $m_0(\omega)$ such that $m_0\leq m$ and \eqref{levy2} holds for almost all $\omega\in\Omega$. In other words, for almost all $\omega\in\Omega$, if $m_0\leq m$ and $m z\leq t\leq(m+1)z$,
\begin{align*}
\frac{\ln X^{np}(t)}{\ln t}\leq \frac{(1+\bar{\epsilon})\ln (mz)}{\ln (mz)}=1+\bar{\epsilon}.
\end{align*}
Because $\bar{\epsilon}$ is arbitrarily small, then
\begin{align*}
\underset{t\to \infty}{\lim \sup}\frac{\ln X^{n}(t)}{\ln t}\leq \frac{1}{p}\;\;\; \mbox{a.s.}
\end{align*}
Therefore, for any small $\bar{v}\in(0,1-1/p)$, there is a constant $\bar{V}=\bar{V}(\omega)$, for which if $t\geq\bar{V}$ then
\begin{align*}
\ln X^{n}(t)\leq \Big(\frac{1}{p}+\bar{v}\Big)\ln t.
\end{align*}
Hence
\begin{align*}
\underset{t\to \infty}{\lim \sup}\frac{ X^{n}(t)}{ t}\leq \underset{t\to \infty}{\lim \sup}\frac{t^{\frac{1}{p}+\bar{v}}}{t}=0.
\end{align*}
This together with the positivity of the solution implies
\begin{align*}
\underset{t\to \infty}{\lim }\frac{ X^{n}(t)}{ t}=0\;\;\; \mbox{a.s.}
\end{align*}

\textbf{Step 3.}\;\;Now, we define
\begin{align*}
\mathcal{I}_1(t)&=\frac{1}{t}\int^t_0X(s)S(s)\textup{d}\mathcal{W}_1(s),\hspace{0.5cm}\mathcal{I}_2(t)=\frac{1}{t}\int^t_0X(s)I(s)\textup{d}\mathcal{W}_2(s),\\
\mathcal{I}_3(t)&=\frac{1}{t}\int^t_0X(s)Q(s)\textup{d}\mathcal{W}_3(s),\hspace{0.5cm} \mathcal{I}_4(t)=\frac{1}{ t}\int^t_0\int_ZX^2(s^{-})\big(\big(1+\bar{\eta}\big)^2-1\big)\widetilde{\mathcal{N}}(\textup{d}s,\textup{d}u).
\end{align*}
In view of the Burkholder–Davis–Gundy inequality, we find that for $\bar{p}>2$,
\begin{align}
\E\Bigg[\underset{m \leq t\leq(m+1)}{\sup}|\mathcal{I}_1(t)|^{\bar{p}}\Bigg]\leq C_{\bar{p}}\E\Bigg[\int^t_0X^4(s)\textup{d}s\Bigg]^{\frac{\bar{p}}{2}}\leq C_{\bar{p}}\Bigg[\E\int^t_0X^4(s)\textup{d}s\Bigg]^{\frac{\bar{p}}{2}}\leq C_{\bar{p}}\Bigg[\E\int^t_0|X^4(s)|\textup{d}s\Bigg]^{\frac{\bar{p}}{2}},
\label{clef2}
\end{align}
where $C_{\bar{p}}=\left[\frac{\bar{p}^{\bar{p}+1}}{2(\bar{p}-1)^{\bar{p}-1}}\right]^{\bar{p}/2}>0$. Similarly to the previous case, we find
\begin{align*}
\E\Bigg[\underset{m \leq t\leq(m+1)}{\sup}|\mathcal{I}_4(t)|^{\bar{p}}\Bigg]&\leq C_{\bar{p}}\Bigg(\int_Z \Big(\big(1+\bar{\eta}\big)^{2}-1\Big)^2\nu(\textup{d}u)\Bigg)^{\frac{{\bar{p}}}{2}}\Bigg[\E\int^t_0|X^4(s)|\textup{d}s\Bigg]^{\frac{{\bar{p}}}{2}}.
\end{align*}
Via \eqref{clef} and \eqref{clef2}, one can see that
\begin{align*}
\E\Big[\underset{m \leq t\leq(m+1)}{\sup}|\mathcal{I}_1(t)|^{{\bar{p}}}\Big]&\leq  2^{1+\frac{\bar{p}}{2}}\bar{M}C_{\bar{p}}m^\frac{\bar{p}}{2}.
\end{align*}
For any arbitrary positive constant $\tilde{\epsilon}$, and by making use of Chebyshev's inequality, we obtain
\begin{align*}
\P\Big\{\underset{m \leq t\leq(m+1)}{\sup}|\mathcal{I}_1(t)|^{{\bar{p}}}>\bar{p}^{1+\tilde{\epsilon}+\frac{\bar{p}}{2}}\Big\}\leq \frac{\E\Big[\underset{m \leq t\leq(m+1)}{\sup}|\mathcal{I}_1(m+1)|^{{\bar{p}}}\Big]}{\bar{p}^{1+\tilde{\epsilon}+\frac{\bar{p}}{2}}}\leq \frac{2^{1+\frac{\bar{p}}{2}}\bar{M}C_{\bar{p}}}{\bar{p}^{1+\tilde{\epsilon}}},\hspace*{0.3cm}m=1,2,...
\end{align*}
Using the Borel-Cantelli lemma, one has
\begin{align*}
\frac{\ln|\mathcal{I}_1(t)|^{\bar{p}}}{\ln t}\leq \frac{\big(1+\tilde{\epsilon}+\frac{\bar{p}}{2}\big)\ln m}{\ln m}=1+\tilde{\epsilon}+\frac{\bar{p}}{2}.
\end{align*}
Taking the limit superior on both sides of the last inequality and applying the arbitrariness of $\tilde{\epsilon}$, we deduce
\begin{align*}
\underset{t\to \infty}{\lim \sup}\frac{\ln|\mathcal{I}_1(t)|}{\ln t}\leq \frac{1}{2}+\frac{1}{\bar{p}}\;\;\; \mbox{a.s.}
\end{align*}
That is to say, for any positive constant $\bar{\tau}\in\big(0,\frac{1}{2}-\frac{1}{\bar{p}}\big)$, there exists a constant $\bar{T}=\bar{T}(\omega)$ such that for all $t\geq \bar{T}$,
\begin{align*}
\ln|\mathcal{I}_1(t)|\leq\Big(\frac{1}{2}+\frac{1}{\bar{p}}+\bar{\tau}\Big)\ln t.
\end{align*}
Dividing both sides of the last inequality by $t$ and taking the limit superior, we have
\begin{align*}
\underset{t\to \infty}{\lim \sup}\frac{|\mathcal{I}_1(t)|}{t}\leq \underset{t\to \infty}{\lim \sup}\frac{t^{\frac{1}{2}+\frac{1}{\bar{p}}+\bar{\tau}}}{t}=0.
\end{align*}
Combining it with $\underset{t\to \infty}{\lim \inf}\frac{|\mathcal{I}_1(t)|}{t}\geq 0$, one has $\underset{t\to \infty}{\lim }\frac{|\mathcal{I}_1(t)|}{t}=\underset{t\to \infty}{\lim }\frac{\mathcal{I}_1(t)}{t}=0\;\;\; \mbox{a.s.}$\\
In the same way, we prove that
\begin{align*}
\underset{t\to \infty}{\lim }\frac{\mathcal{I}_2(t)}{t}=0,\hspace{0.3cm}\underset{t\to \infty}{\lim }\frac{\mathcal{I}_3(t)}{t}=0,\hspace{0.3cm}\underset{t\to \infty}{\lim }\frac{\mathcal{I}_4(t)}{t}=0\;\;\; \mbox{a.s.}
\end{align*}
This completes the proof.
\end{proof}
\begin{rem}
The positivity of the solutions $X(t)$ and $Y(t)$ together with the stochastic comparison theorem \cite{mo}, leads to the fact that $N(t)\leq X(t)$ a.s. which in turn implies that 
\begin{align*}
\underset{t\to\infty}{\lim}\frac{S^n(t)}{t}=0, \hspace{0.3cm}  \underset{t\to\infty}{\lim}\frac{I^n(t)}{t}=0, \hspace{0.3cm} \underset{t\to\infty}{\lim}\frac{Q^n(t)}{t}=0,  ~~~\mbox{and even}~~~ \underset{t\to\infty}{\lim} \frac{N^n(t)}{t}=0 \hspace{0.3cm}\mbox{a.s.}
\end{align*}
\label{comp}
\end{rem}
\begin{rem}
By comparing our findings with those of Lemmas 3.3 and 3.4 in \cite{166}, one can conclude that the new result \ref{lem22} presents a modified and generalized version to these lemmas, which will be necessary to prove Lemma \ref{lemmas}.
\end{rem}
\begin{lem}
Let $Y(0)\in\R^3_+$ be a positive given value. If $Y(t)$ denotes the positive solution of system (\ref{s2}) that starts from $Y(0)$, then
\begin{align*}
&\bullet \underset{t\to\infty}{\lim}\frac{\int^t_0 S(s)\textup{d}\mathcal{W}_{\beta}(s)}{t}=0, \hspace{0.3cm}\underset{t\to\infty}{\lim}\frac{\int^t_0 S(s)\textup{d}\mathcal{W}_1(s)}{t}=0, \;\;\underset{t\to\infty}{\lim}\frac{\int^t_0 I(s)\textup{d}\mathcal{W}_2(s)}{t}=0,  \hspace{0.2cm}\mbox{and}\hspace{0.2cm} \underset{t\to\infty}{\lim}\frac{\int^t_0 Q(s)\textup{d}\mathcal{W}_3(s)}{t}=0 \hspace{0.3cm}\mbox{a.s.}\\
&\bullet \underset{t\to\infty}{\lim}\frac{\int^t_0\int_Z \eta_1(u)S(s^{-})\widetilde{\mathcal{N}}(\textup{d}s,\textup{d}u)}{t}=0,\;\underset{t\to\infty}{\lim}\frac{\int^t_0\int_Z \eta_2(u)I(s^{-})\widetilde{\mathcal{N}}(\textup{d}s,\textup{d}u)}{t}=0,\;\underset{t\to\infty}{\lim}\frac{\int^t_0\int_Z \eta_3(u)Q(s^{-})\widetilde{\mathcal{N}}(\textup{d}s,\textup{d}u)}{t}=0\hspace{0.3cm}\mbox{a.s.}
\end{align*}
\label{lem1m}
\end{lem}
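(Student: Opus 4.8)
The plan is to recognise each of the seven expressions as $1/t$ times a local martingale vanishing at the origin, and then to invoke the strong law of large numbers for local martingales: if $M$ is a local martingale with $M(0)=0$ whose (predictable) quadratic variation $\langle M\rangle$ satisfies $\int_0^\infty(1+s)^{-2}\,\textup{d}\langle M\rangle(s)<\infty$ a.s., then $\lim_{t\to\infty}M(t)/t=0$ a.s. (a standard consequence of the martingale convergence theorem together with Kronecker's lemma; see e.g. \cite{mo}). The four integrals in the first display are continuous local martingales $\int_0^t\phi(s)\,\textup{d}\mathcal{W}(s)$ with $\phi\in\{S,I,Q\}$; the three in the second display are purely discontinuous local martingales $\int_0^t\int_Z\eta_i(u)\phi(s^-)\widetilde{\mathcal{N}}(\textup{d}s,\textup{d}u)$. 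So the whole proof reduces to writing down $\langle M\rangle$ in each case and checking that single integral condition, which in turn comes down to one uniform-in-time second moment bound on the solution.

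First I would secure that bound. By assumption (A$_5$) there is a $p>1$ with $\Gamma_{2,p}>0$; then Lemma \ref{L1} (first item, with $n=2$) makes $\E[N^{2p}(t)]$ bounded uniformly in $t\ge0$, and since $2p>2$, Jensen's inequality upgrades this to $\sup_{t\ge0}\E[N^2(t)]=:C_2<\infty$. Because $0<S(t),I(t),Q(t)\le N(t)$ a.s. by Lemma \ref{thmp}, the same constant $C_2$ bounds $\sup_{t\ge0}\E[S^2(t)]$, $\sup_{t\ge0}\E[I^2(t)]$ and $\sup_{t\ge0}\E[Q^2(t)]$.

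Next, for a Brownian term, say $M(t)=\int_0^t S(s)\,\textup{d}\mathcal{W}_{\beta}(s)$, one has $\langle M\rangle(t)=\int_0^t S^2(s)\,\textup{d}s$, so by Tonelli's theorem and the moment bound
\begin{align*}
\E\int_0^\infty\frac{\textup{d}\langle M\rangle(s)}{(1+s)^2}=\int_0^\infty\frac{\E[S^2(s)]}{(1+s)^2}\,\textup{d}s\le C_2\int_0^\infty\frac{\textup{d}s}{(1+s)^2}=C_2<\infty,
\end{align*}
hence the integral is a.s. finite and the strong law gives $M(t)/t\to0$ a.s.; the terms driven by $\mathcal{W}_1$, $\mathcal{W}_2$, $\mathcal{W}_3$ are handled verbatim with $S$ replaced by $S$, $I$, $Q$ respectively. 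For a jump term, say $M(t)=\int_0^t\int_Z\eta_1(u)S(s^-)\widetilde{\mathcal{N}}(\textup{d}s,\textup{d}u)$, the predictable quadratic variation is $\langle M\rangle(t)=\big(\int_Z\eta_1^2(u)\,\nu(\textup{d}u)\big)\int_0^t S^2(s)\,\textup{d}s$ — using that the jump set of $S$ is a.s. Lebesgue-null and that $\int_Z\eta_1^2(u)\,\nu(\textup{d}u)<\infty$ by (A$_1$) — and the same computation bounds $\E\int_0^\infty(1+s)^{-2}\,\textup{d}\langle M\rangle(s)$ by $C_2\int_Z\eta_1^2(u)\,\nu(\textup{d}u)<\infty$; the strong law again closes the case, and the terms with $\eta_2 I$, $\eta_3 Q$ are identical.

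I do not expect a serious obstacle: the substance is entirely in having the uniform $L^2$ bound (supplied by Lemma \ref{L1}) and in using the form of the strong law of large numbers appropriate for \emph{discontinuous} local martingales, phrased in terms of the predictable quadratic variation. If one prefers to avoid citing that result, the alternative is to replay Steps 2--3 of the proof of Lemma \ref{lem22} almost word for word (Burkholder--Davis--Gundy and Kunita inequalities, Chebyshev, Borel--Cantelli), the only extra bookkeeping being the finiteness of $\int_Z|\eta_i(u)|^{\bar p}\,\nu(\textup{d}u)$ for the exponent $\bar p>2$ used there, which holds because the $\eta_i$ are bounded and (A$_1$) is in force.
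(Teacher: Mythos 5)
Your proof is correct, but it takes a genuinely different route from the paper's. The paper does not actually write out a proof of this lemma: a remark immediately after it states that the result is easily demonstrated by an analysis similar to the proof of Lemma \ref{lem22}, i.e.\ the intended argument is precisely the fallback you sketch at the end — uniform moment bounds combined with Burkholder--Davis--Gundy, Chebyshev and Borel--Cantelli, as in Steps 2--3 of Lemma \ref{lem22}. What you do instead is package each of the seven terms as $1/t$ times a locally square-integrable local martingale vanishing at $0$ and invoke the strong law of large numbers for local martingales in its weighted form ($\int_0^\infty(1+s)^{-2}\,\textup{d}\langle M\rangle(s)<\infty$ a.s.\ implies $M(t)/t\to0$ a.s.), verifying the hypothesis via the uniform bound $\sup_{t\ge0}\E[N^{2}(t)]<\infty$, which you correctly extract from Lemma \ref{L1} with $n=2$ and Jensen's inequality, together with $S,I,Q\le N$. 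This is shorter and avoids the exponent and partition bookkeeping of the BDG/Borel--Cantelli route; the one care point is the one you yourself flag: for the compensated-Poisson integrals you need the c\`adl\`ag (Liptser-type) version of the strong law phrased with the predictable quadratic variation, not merely the continuous-martingale version usually quoted from \cite{mo}, and your computation of $\langle M\rangle(t)=\big(\int_Z\eta_i^2(u)\nu(\textup{d}u)\big)\int_0^t\phi^2(s)\,\textup{d}s$ under (A$_1$) is the right one. Both arguments rest on the same substantive input (the uniform second-moment bound from Lemma \ref{L1}), so your proof stands as a valid and somewhat cleaner alternative to the argument the paper intends.
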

\begin{rem}
The last lemma is easily demonstrated by using an analysis similar to that in the proof of Lemma \ref{lem22}.
\end{rem} 
\begin{rem}
In the absence of Lévy noise (see for example \cite{sta}), the stationary distribution expression is used to calculate the time averages of the auxiliary process solution by employing the ergodic theorem \cite{mo}. Unfortunately, the said expression is still unknown in the case of the Lévy jumps. This problem is implicitly mentioned in \cite{58} as an open question, and the authors presented the threshold analysis of their model with an unknown stationary distribution formula. In this article, we propose an alternative method to establish the exact expression of the threshold parameter without having recourse to the use of ergodic theorem. This new idea that we propose is presented in the following lemma.
\end{rem}
\begin{lem}
Let $X(t)\in\R_+$ be the solution of the equation \eqref{s4} with any given initial value $X(0)=N(0)\in\R_+$. Suppose that $\chi=2\mu_1-\bar{\sigma}-\int_Z\big[\bar{\eta}^2(u)\vee\underline{\eta}^2(u)\big]\nu(\textup{d}u)>0$, then 
\begin{align*}
\underset{t\to\infty}{\lim}\frac{1}{t}\int^t_0X(s)\textup{d}s=\frac{A}{\mu_1}\hspace{0.2cm}\mbox{a.s.}
\end{align*} 
and
\begin{align*}
\underset{t\to\infty}{\lim}\frac{1}{t}\int^t_0X^2(s)\textup{d}s\leq\frac{2A^2}{\mu_1\chi}\hspace{0.2cm}\mbox{a.s.}
\end{align*}
\label{lemmas}
\end{lem}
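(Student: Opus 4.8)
The plan is to extract both assertions from the integrated form of the scalar equation \eqref{s4}, relying on the a.s.\ limits already produced in Lemmas \ref{lem22} and \ref{lem1m}. First I would integrate \eqref{s4} over $[0,t]$, which gives
\begin{align*}
X(t)-X(0)=At-\mu_1\int_0^t X(s)\,\textup{d}s+\mathcal{M}_1(t),
\end{align*}
where $\mathcal{M}_1(t)$ collects all the martingale contributions in $\bar{\mathcal{P}}_1+\bar{\mathcal{P}}_2+\mathcal{P}_3$: the three Brownian integrals $\sigma_i\int_0^t(\cdot)\,\textup{d}\mathcal{W}_i$ in $S,I,Q$ and the compensated Poisson integral $\int_0^t\int_Z\big(\eta_1(u)S+\eta_2(u)I+\eta_3(u)Q\big)(s^-)\,\widetilde{\mathcal{N}}(\textup{d}s,\textup{d}u)$. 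Dividing by $t$ and solving for the time average,
\begin{align*}
\frac{1}{t}\int_0^t X(s)\,\textup{d}s=\frac{1}{\mu_1}\left(A-\frac{X(t)-X(0)}{t}+\frac{\mathcal{M}_1(t)}{t}\right).
\end{align*}
Since $X(t)/t\to 0$ a.s.\ (Lemma \ref{lem22}, or Remark \ref{comp}) and each term of $\mathcal{M}_1(t)/t$ tends to $0$ a.s.\ by Lemma \ref{lem1m}, letting $t\to\infty$ yields $\lim_{t\to\infty}\frac1t\int_0^t X(s)\,\textup{d}s=A/\mu_1$.

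For the second estimate I would apply the Itô formula for jump--diffusions to $V(X)=X^2$. Writing $\xi(u,\cdot):=\eta_1(u)S+\eta_2(u)I+\eta_3(u)Q$ for the size of a jump of $X$ at mark $u$, the jump compensator contributes $\int_Z\big[(X+\xi)^2-X^2-2X\xi\big]\nu(\textup{d}u)=\int_Z\xi^2(u,\cdot)\,\nu(\textup{d}u)$, so that
\begin{align*}
\textup{d}X^2(t)=\Big(2AX(t)-2\mu_1X^2(t)+\sigma_1^2S^2+\sigma_2^2I^2+\sigma_3^2Q^2+\int_Z\xi^2(u,t)\,\nu(\textup{d}u)\Big)\textup{d}t+\textup{d}\mathcal{M}_2(t),
\end{align*}
where $\mathcal{M}_2$ is the local martingale with Brownian part $2X\big(\sigma_1S\,\textup{d}\mathcal{W}_1+\sigma_2I\,\textup{d}\mathcal{W}_2+\sigma_3Q\,\textup{d}\mathcal{W}_3\big)$ and jump part $\int_Z\big[(X+\xi)^2-X^2\big]\widetilde{\mathcal{N}}(\textup{d}t,\textup{d}u)$. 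Next I would use $\sigma_1^2S^2+\sigma_2^2I^2+\sigma_3^2Q^2\le\bar{\sigma}N^2$, the elementary bounds $\underline{\eta}(u)N\le\xi(u,\cdot)\le\bar{\eta}(u)N$ (a convex combination of the $\eta_i$ scaled by $N$), whence $\xi^2(u,\cdot)\le[\bar{\eta}^2(u)\vee\underline{\eta}^2(u)]N^2$, and the comparison $N(t)\le X(t)$ a.s.\ from Remark \ref{comp}, to collapse the drift and obtain
\begin{align*}
\textup{d}X^2(t)\le\big(2AX(t)-\chi X^2(t)\big)\textup{d}t+\textup{d}\mathcal{M}_2(t).
\end{align*}

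Integrating this over $[0,t]$, dividing by $t$ and rearranging gives
\begin{align*}
\chi\cdot\frac{1}{t}\int_0^tX^2(s)\,\textup{d}s\le\frac{2A}{t}\int_0^tX(s)\,\textup{d}s-\frac{X^2(t)}{t}+\frac{X^2(0)}{t}+\frac{\mathcal{M}_2(t)}{t}.
\end{align*}
Here $X^2(t)/t\to 0$ a.s.\ (Lemma \ref{lem22} with $n=2$), $\frac1t\int_0^tX(s)\,\textup{d}s\to A/\mu_1$ a.s.\ by the first part, and $\mathcal{M}_2(t)/t\to 0$ a.s.; taking $\limsup_{t\to\infty}$ and dividing by $\chi>0$ then gives the asserted bound $2A^2/(\mu_1\chi)$. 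To justify $\mathcal{M}_2(t)/t\to 0$, the Brownian pieces are exactly the objects handled in the second bullet of Lemma \ref{lem22}, and for the jump piece I would rerun the Burkholder--Davis--Gundy plus Borel--Cantelli argument of Steps~2--3 in the proof of Lemma \ref{lem22}, which relies only on the uniform moment bound $\E[X^{np}(t)]\le\bar{M}$ established there and on assumption $(\textup{A}_4)$.

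I expect the only real obstacle to be this last point --- showing that the normalised martingale attached to $X^2$ vanishes, since its predictable quadratic variation involves $\int_0^tX^4(s)\,\textup{d}s$; this is precisely where $(\textup{A}_4)$ and the uniform fourth moment bound for $X$ are needed. Everything else is bookkeeping on top of Lemmas \ref{lem22} and \ref{lem1m}: the algebraic identity $(X+\xi)^2-X^2-2X\xi=\xi^2$ is what produces the constant $\chi=2\mu_1-\bar{\sigma}-\int_Z[\bar{\eta}^2(u)\vee\underline{\eta}^2(u)]\nu(\textup{d}u)$, and the hypothesis $\chi>0$ is exactly what allows dividing through at the final step.
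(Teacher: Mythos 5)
Your proposal is correct and follows essentially the same route as the paper: integrate \eqref{s4} and invoke Lemmas \ref{lem22} and \ref{lem1m} to get $\frac1t\int_0^tX(s)\,\textup{d}s\to A/\mu_1$, then apply Itô's formula to $X^2$, bound the drift by $2AX-\chi X^2$ via $\bar{\sigma}$, $\bar{\eta}^2\vee\underline{\eta}^2$ and $N\le X$, and dispose of the normalised martingale terms exactly as in Lemma \ref{lem22}. The only cosmetic difference is that the paper dominates the jump martingale by the integrand $\big((1+\bar{\eta}(u))^2-1\big)X^2$ so as to quote the third bullet of Lemma \ref{lem22} directly, whereas you propose rerunning the BDG--Borel--Cantelli argument, which amounts to the same thing.
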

\begin{proof}
Integrating from $0$ to $t$ on both sides of (\ref{s4}) yields
\begin{align*}
\frac{X(t)-X(0)}{t}&=A-\frac{\mu_1}{t}\int^t_0X(s)\textup{d}s+\frac{\sigma_1}{t}\int_0^t S(s)\textup{d}\mathcal{W}_1(s)+\frac{1}{t}\int^t_0\int_Z \eta_1(u)S(s^{-})\widetilde{\mathcal{N}}(\textup{d}s,\textup{d}u)\\&\;\;\;+\frac{\sigma_2}{t}\int_0^t I(s)\textup{d}\mathcal{W}_2(s)+\frac{1}{t}\int^t_0\int_Z \eta_2(u)I(s^{-})\widetilde{\mathcal{N}}(\textup{d}s,\textup{d}u)\\&\;\;\;+\frac{\sigma_3}{t}\int_0^t Q(s)\textup{d}\mathcal{W}_3(s)+\frac{1}{t}\int^t_0\int_Z \eta_3(u)Q(s^{-})\widetilde{\mathcal{N}}(\textup{d}s,\textup{d}u).
\end{align*}
Clearly, we can derive that
\begin{align*}
\frac{1}{t}\int^t_0X(s)\textup{d}s&=\frac{A}{\mu_1}-\frac{X(t)-X(0)}{\mu_1 t}+\frac{\sigma_1}{\mu_1 t}\int_0^t S(s)\textup{d}\mathcal{W}_1(s)+\frac{1}{\mu_1 t}\int^t_0\int_Z \eta_1(u)S(s^{-})\widetilde{\mathcal{N}}(\textup{d}s,\textup{d}u)\\&\;\;\;+\frac{\sigma_2}{\mu_1 t}\int_0^t I(s)\textup{d}\mathcal{W}_2(s)+\frac{1}{\mu_1 t}\int^t_0\int_Z \eta_2(u)I(s^{-})\widetilde{\mathcal{N}}(\textup{d}s,\textup{d}u)\\&\;\;\;+\frac{\sigma_3}{\mu_1 t}\int_0^t Q(s)\textup{d}\mathcal{W}_3(s)+\frac{1}{\mu_1 t}\int^t_0\int_Z \eta_3(u)Q(s^{-})\widetilde{\mathcal{N}}(\textup{d}s,\textup{d}u).
\end{align*}
According to Lemma \ref{lem1m}, we have
\begin{align*}
\underset{t\to\infty}{\lim}\frac{1}{t}\int^t_0X(s)\textup{d}s=\frac{A}{\mu_1}\hspace{0.2cm}\mbox{a.s.}
\end{align*}
Now, applying the generalized Itô’s formula to equation (\ref{s4}) leads to
\begin{align}
\text{d}X^2(t)&\leq\bigg(2X(t)\Big(A-\mu_1X(t)\Big)+\bar{\sigma}X^2(t)+\int_ZX^2(t)\big[\bar{\eta}^2(u)\vee\underline{\eta}^2(u)\big]\nu(\textup{d}u)\bigg)\textup{d}t\nonumber\\&\;\;\;+2X(t)\Big(\sigma_1 S(t)\textup{d}\mathcal{W}_1(t)+\sigma_2 I(t)\textup{d}\mathcal{W}_2(t)+\sigma_3 Q(t)\textup{d}\mathcal{W}_3(t)\Big)\nonumber\\&\;\;\;+\int_Z X^2(t^{-})\Big(\big(1+\bar{\eta}(u)\big)^2-1\Big)\widetilde{\mathcal{N}}(\textup{d}t,\textup{d}u).
\label{maj2}
\end{align}
Integrating both sides from $0$ to $t$, yields
\begin{align*}
X^2(t)-X^2(0)&\leq 2A\int^t_0X(s)\textup{d}s-\bigg(2\mu_1-\bar{\sigma}-\int_Z\big[\bar{\eta}^2(u)\vee\underline{\eta}^2(u)\big]\nu(\textup{d}u)\bigg)\int^t_0X^2(s)\textup{d}s\\&\;\;\;+2\sigma_1\int^t_0X(s)S(s)\textup{d}\mathcal{W}_1(s)+2\sigma_2\int^t_0X(s)I(s)\textup{d}\mathcal{W}_2(s)+2\sigma_3\int^t_0X(s)Q(s)\textup{d}\mathcal{W}_3(s)\\&\;\;\;+\int^t_0\int_ZX^2(s^{-})\Big(\big(1+\bar{\eta}(u)\big)^2-1\Big)\widetilde{\mathcal{N}}(\textup{d}s,\textup{d}u).
\end{align*}
Let $\chi=2\mu_1-\bar{\sigma}-\int_Z\big[\bar{\eta}^2(u)\vee\underline{\eta}^2(u)\big]\nu(\textup{d}u)>0$. Therefore
\begin{align*}
\frac{1}{t}\int^t_0X^2(s)\textup{d}s&\leq \frac{2A}{\chi t}\int^t_0X(s)\textup{d}s
+\frac{X^2(0)-X^2(t)}{\chi t}+\frac{2\sigma_1}{\chi t}\int^t_0X(s)S(s)\textup{d}\mathcal{W}_1(s)\\&\;\;\;+\frac{2\sigma_2}{\chi t}\int^t_0X(s)I(s)\textup{d}\mathcal{W}_2(s)+\frac{2\sigma_3}{\chi t}\int^t_0X(s)Q(s)\textup{d}\mathcal{W}_3(s)\\&\;\;\;+\frac{1}{\chi t}\int^t_0\int_ZX^2(s)\Big(\big(1+\bar{\eta}(u)\big)^2-1\Big)\widetilde{\mathcal{N}}(\textup{d}s,\textup{d}u).
\end{align*}
By Lemma \ref{lem22} and assumptions \textbf{($\textup{A}_4$)}-\textbf{($\textup{A}_5$)}, we can easily verify that
\begin{align*}
\underset{t\to\infty}{\lim}\frac{1}{t}\int^t_0X^2(s)\textup{d}s\leq\frac{2A^2}{\mu_1\chi}\hspace{0.2cm}\mbox{a.s.}
\end{align*}
\end{proof}
Now, we present a lemma which gives mutually exclusive possibilities for the existence of an ergodic stationary distribution to the system (\ref{s2}).
\begin{lem}[\textit{Mutually exclusive possibilities lemma}, \cite{10}]
Let $\phi(t)\in \R^n$ be a stochastic Feller process, then either an ergodic probability measure exists, or 
\begin{align}
\underset{t\to \infty}{\lim }\underset{\hat{\nu}}{\sup }\frac{1}{t}\int^t_0\int \P(s,x,\Sigma)\hat{\nu}(dx)\textup{d}s=0,
\label{imp}
\end{align}
for any compact set $\Sigma\subset\R^n$, where the supremum is taken over all initial distributions $\hat{\nu}$ on $\R^n$ and $\P(t,x,\Sigma)$ is the probability for $\phi(t)\in \Sigma$ with $\phi(0)=x\in \R^n$.
\label{poss}
\end{lem}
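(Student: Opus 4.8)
The statement is quoted essentially verbatim from \cite{10}, and a self-contained argument belongs to that reference; here I only outline the underlying mechanism, which is a continuous-time Krylov--Bogolyubov construction combined with the ergodic decomposition theorem, organized by contraposition.

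The plan is as follows. Suppose \eqref{imp} fails. Then there exist a compact set $\Sigma_0\subset\R^n$, a constant $\epsilon_0>0$, a sequence $t_k\uparrow\infty$ and initial distributions $\hat{\nu}_k$ such that the time-averaged occupation measures
\[
\Pi_k(\cdot):=\frac{1}{t_k}\int_0^{t_k}\int_{\R^n}\P(s,x,\cdot)\,\hat{\nu}_k(\textup{d}x)\,\textup{d}s
\]
are Borel probability measures on $\R^n$ satisfying $\Pi_k(\Sigma_0)\geq\epsilon_0$ for every $k$. First I would pass to the one-point compactification $\R^n\cup\{\infty\}$, regard each $\Pi_k$ as a probability measure there, and extract (Prokhorov/Banach--Alaoglu) a subsequence converging weakly to some $\Pi$; the bound $\Pi_k(\Sigma_0)\geq\epsilon_0$ forces $\Pi(\R^n)\geq\epsilon_0>0$, so a non-trivial portion of the mass does not escape to infinity, and after normalizing the $\R^n$-part one obtains a genuine probability measure $\widehat{\Pi}$ on $\R^n$. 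I expect this control of the escape of mass to be the main obstacle: a uniform lower bound on a single compact set does not by itself yield tightness, and it is precisely this difficulty that makes the conclusion a dichotomy rather than an unconditional existence statement.

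Next I would use the Feller property of $\phi$. Starting from the fact that the Cesàro averages $\Pi_k$ are, in the limit, invariant under the time-$h$ shift up to boundary terms of order $h/t_k$, and invoking the continuity of $x\mapsto\E[f(\phi(h))\mid\phi(0)=x]$ for bounded continuous $f$, one concludes that $\widehat{\Pi}$ is invariant under the transition semigroup, i.e.\ the set $\mathcal{I}$ of invariant probability measures of $\phi$ is non-empty. Finally, by the ergodic decomposition theorem every element of $\mathcal{I}$ is a barycenter of ergodic invariant measures, so $\mathcal{I}\neq\varnothing$ already guarantees the existence of an ergodic probability measure for $\phi$. This contradicts the assumption that the first alternative in the lemma fails, and the dichotomy follows.
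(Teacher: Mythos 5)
The paper does not prove this lemma at all: it is imported verbatim from \cite{10} and used as a black box, so your opening remark that a self-contained argument belongs to that reference is exactly consistent with the paper's treatment, and there is no in-paper proof to compare against step by step. Your sketch is the standard mechanism behind the dichotomy (a continuous-time Krylov--Bogolyubov argument run by contraposition, followed by ergodic decomposition), and it is sound as an outline: negating \eqref{imp} gives occupation measures $\Pi_k$ with $\Pi_k(\Sigma_0)\geq\epsilon_0$ on a fixed compact set, the portmanteau inequality for the closed set $\Sigma_0$ prevents all mass from escaping in the limit, and the surviving part normalizes to an invariant probability measure whose ergodic components then furnish an ergodic measure. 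The one step you gloss over, and which deserves a more careful phrasing than ``invariant up to boundary terms,'' is the invariance of the normalized limit when part of the mass does escape: the semigroup does not act on the point at infinity, so one cannot simply pass weak limits on the compactification through $P_h$. The clean route is to take vague limits on $\R^n$: for $0\leq f$ continuous with compact support one gets $\int P_h f\,\textup{d}\mu\leq\int f\,\textup{d}\mu$ for the limit $\mu$ (because $P_hf$ is continuous and nonnegative but not compactly supported, only the lower semicontinuity inequality survives), i.e.\ $\mu P_h\leq\mu$ setwise; since the process is conservative, $\mu P_h$ and $\mu$ have the same total mass, forcing $\mu P_h=\mu$, and then normalization gives the invariant probability measure. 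With that refinement your outline matches the classical proof in the cited source; your identification of the escape-of-mass issue as the reason the statement is a dichotomy rather than an unconditional existence theorem is also correct.
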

\section{Long-term dynamics of the stochastic system (\ref{s2})} \label{sec3}
\subsection{Ergodicity and persistence in the mean}
In the following, we aim to give the condition for the ergodicity the persistence of the disease. We suppose that $\chi>0$ and we define the parameter:
\begin{align*}
\mathcal{R}^s_0&=\Big(\mu_2+\delta+\gamma+\frac{\sigma_2^2}{2}\Big)^{-1}\left(\frac{\beta A}{\mu_1}-\frac{A^2\sigma_{\beta}^2}{\mu_1\chi}-\int_Z \eta_2(u)-\ln(1+\eta_2(u))\nu(\textup{d}u)\right).
\end{align*}
For simplicity, we introduce the following notations:
\begin{align*}
M_1&=\frac{\mu_1^2}{4\beta^2A}\Big(\mu_2+\delta+\gamma+\frac{\sigma_2^2}{2}\Big)\Big(\mathcal{R}_0^s-1\Big),\\
M_2&=\frac{p\mu_1 \Gamma_{2,p}\beta^{-(p+1)}}{8 \Delta}\Big(\mu_2+\delta+\gamma+\frac{\sigma_2^2}{2}\Big)\Big(\mathcal{R}_0^s-1\Big),\\
M_3&=\frac{\mu_1 q}{8\beta}\Big(\mu_2+\delta+\gamma+\frac{\sigma_2^2}{2}\Big)\Big(\frac{2A}{\mu_1}+N(0)\Big)^{-1}\Big(\mathcal{R}_0^s-1\Big).
\end{align*}
\begin{thm}
If $\mathcal{R}^s_0>1$, the stochastic system (\ref{s2}) admits a unique stationary distribution and it has the ergodic property for any initial value $Y(0)\in\R^3_+$.
\label{thm1}
\end{thm}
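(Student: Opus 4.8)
The plan is to sidestep the construction of a global Lyapunov function on $\R^3_+$ and instead combine the Feller property of $Y(t)$ with the dichotomy of Lemma \ref{poss}. First I would verify that $Y(t)=(S(t),I(t),Q(t))$ is a Feller process on $(0,\infty)^3$: this is standard given the local Lipschitz / linear growth structure of the drift and of the jump-diffusion coefficients in \eqref{s2} together with the global existence and positivity from Lemma \ref{thmp}, so that $x\mapsto\E\big[f(Y^x(t))\big]$ is bounded and continuous for every $f\in C_b$. Granting this, Lemma \ref{poss} says that \emph{either} an ergodic stationary measure exists \emph{or} \eqref{imp} holds; the whole task under $\mathcal{R}^s_0>1$ is to refute \eqref{imp}.

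So assume \eqref{imp}. Fix an initial point $x_0\in(0,\infty)^3$ and set $\Pi_t(\cdot)=\tfrac1t\int_0^t\P(s,x_0,\cdot)\,\textup{d}s$; along the increasing compact exhaustion $D_k=\{x:\ 1/k\le S,I,Q,\ N\le k\}$ of $(0,\infty)^3$, \eqref{imp} forces $\Pi_t(D_k)\to0$ for every $k$, i.e. the process asymptotically spends (in the averaged sense) almost all its time near $\partial(0,\infty)^3$ or near $\{N=\infty\}$. I would close off each escape route separately, using only the preparatory lemmas:
\begin{enumerate}
\item[(i)] \textbf{No escape to $\{N=\infty\}$.} Lemma \ref{L1} with $n=2$ gives $\limsup_{t}\tfrac1t\int_0^t\E[N^{2p}(s)]\,\textup{d}s\le\tfrac{2\Delta}{\Gamma_{2,p}}$, hence by Markov's inequality $\limsup_t\Pi_t(\{N>R\})\le\tfrac{2\Delta}{R^{2p}\Gamma_{2,p}}$, which is as small as we like for $R$ large; this is the origin of the factor $p\mu_1\Gamma_{2,p}\beta^{-(p+1)}/\Delta$ in $M_2$.
\item[(ii)] \textbf{No escape to $\{S=0\}$ or $\{Q=0\}$.} Applying Itô to $\ln S$ and to $\ln Q$, the terms $A/S$ and $\delta I/Q$ are positive and blow up near the respective faces, so on any region of positive averaged occupation where $I$ is bounded below the drift of $\ln S$, resp. $\ln Q$, is bounded below there; the martingale parts vanish in the time average by Lemma \ref{lem1m}. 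The constants $M_1$ and $M_3$ quantify the minimal averaged time $S$, resp. $Q$, must stay bounded below.
\item[(iii)] \textbf{No escape to $\{I=0\}$ — here $\mathcal{R}^s_0>1$ is used.} If $\tfrac1t\int_0^t(I(s)+Q(s))\,\textup{d}s\to0$, then from $N\le X$ (Remark \ref{comp}) and the representation of the nonnegative gap $X(t)-N(t)=\int_0^t e^{-\mu_1(t-s)}\big(r_2 I(s)+r_3 Q(s)\big)\textup{d}s$ one gets $\tfrac1t\int_0^t S(s)\,\textup{d}s\to\tfrac{A}{\mu_1}$, while $\limsup_t\tfrac1t\int_0^t S^2(s)\,\textup{d}s\le\tfrac{2A^2}{\mu_1\chi}$ by Lemma \ref{lemmas}. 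Feeding these into the Itô expansion $\ln I(t)-\ln I(0)=\int_0^t\!\big(\beta S-(\mu_2+\delta+\gamma+\tfrac{\sigma_2^2}{2})-\tfrac{\sigma_\beta^2}{2}S^2+\int_Z[\ln(1+\eta_2(u))-\eta_2(u)]\nu(\textup{d}u)\big)\textup{d}s+(\text{mart})$ and killing the martingale terms via Lemma \ref{lem1m}, we obtain $\liminf_t\tfrac{\ln I(t)}{t}\ge(\mu_2+\delta+\gamma+\tfrac{\sigma_2^2}{2})(\mathcal{R}^s_0-1)>0$, hence $I(t)\to\infty$ — contradicting $I$ being small. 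Thus $\tfrac1t\int_0^t(I+Q)$ cannot tend to $0$, and a quantitative version forces $\Pi_t$ to charge a fixed compact set uniformly, contradicting \eqref{imp}.
\end{enumerate}

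Combining (i)--(iii) refutes \eqref{imp}, so Lemma \ref{poss} yields an ergodic stationary distribution for \eqref{s2}; uniqueness and the ergodic property then follow by a standard argument, since the refutation of \eqref{imp} is uniform in the starting data and pins down the limit of the occupation measures from every $Y(0)\in\R^3_+$. The step I expect to be the main obstacle is (iii), and in particular making everything uniform over all initial laws $\hat\nu$ as \eqref{imp} requires: this forces the $\ln I$, $\ln S$, $\ln Q$ computations to be carried out with explicit constants (the $M_i$'s) and a single compact set $\Sigma$ whose averaged occupation time is bounded below independently of $\hat\nu$, rather than the cleaner pathwise $t\to\infty$ statements — so the delicate bookkeeping is transferring the almost-sure limits supplied by Lemmas \ref{lem22}--\ref{lemmas} to the expectation-level quantity in \eqref{imp} via (reverse) Fatou.
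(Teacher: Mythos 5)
Your skeleton coincides with the paper's (Feller property plus Lemma \ref{poss}, then refutation of \eqref{imp} using Lemmas \ref{L1}--\ref{lemmas}, with the same threshold $\mathcal{R}^s_0$), but your central step (iii) has a genuine gap. To contradict \eqref{imp} you must produce a \emph{single} compact set $\Sigma$ and an explicit constant $c>0$ with $\liminf_{t\to\infty}\frac{1}{t}\int_0^t\P\big(s,Y(0),\Sigma\big)\,\textup{d}s\ge c$, i.e.\ a quantitative, expectation-level occupation bound. Your contradiction argument (``if $\frac{1}{t}\int_0^t(I+Q)\,\textup{d}s\to0$ then $\ln I$ grows linearly'') only yields the qualitative pathwise conclusion that the time average of $I$ does not vanish almost surely; it supplies no $\epsilon,\zeta$ and no lower bound on averaged occupation, so the ``quantitative version'' you invoke at the end of (iii) is exactly the missing ingredient, not a routine upgrade. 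The paper's device, which you do not have, is to apply It\^o to the combined functional $\ln I-\frac{\beta}{\mu_1}\big(X-S\big)$ rather than to $\ln I$ alone: the correction term converts the troublesome $\beta SI$ contribution into an explicit drift bounded below by $\beta X-(\mu_2+\delta+\gamma+\frac{\sigma_2^2}{2})-\frac{\sigma_\beta^2}{2}S^2-\int_Z[\eta_2(u)-\ln(1+\eta_2(u))]\nu(\textup{d}u)-\frac{\beta^2}{\mu_1}SI$, and, after using $\limsup_t\frac{1}{t}\ln I\le0$ and Lemma \ref{lemmas}, this gives the explicit almost-sure estimate \eqref{ergo1}, namely $\liminf_t\frac{1}{t}\int_0^t\beta S(s)I(s)\,\textup{d}s\ge\frac{\mu_1}{\beta}\big(\mu_2+\delta+\gamma+\frac{\sigma_2^2}{2}\big)\big(\mathcal{R}^s_0-1\big)>0$. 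That constant is then moved to occupation measures by Fatou, by splitting off $\{S\le\epsilon\}$ and $\{I\le\epsilon\}$ where $\beta SI\mathbf{1}\le\beta\epsilon I$ resp.\ $\beta\epsilon S$ is controlled by first moments, by Young's inequality combined with the $N^{2p}$ bound of Lemma \ref{L1} to pass from $\E[\beta SI\mathbf{1}_{\Omega_1}]$ to $\E[\mathbf{1}_{\Omega_1}]$, and by Chebyshev to truncate large $S,I$; this is where $M_1,M_2,M_3$ actually enter.

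Two further remarks. The uniformity over initial laws $\hat\nu$ that you single out as the main obstacle is not needed: the supremum in \eqref{imp} dominates its value at the point mass at $Y(0)$, so a positive averaged-occupation bound for one fixed starting point already refutes \eqref{imp}, and this is all the paper proves. Also, your step (ii) via It\^o on $\ln S$ and $\ln Q$ is both vague and circular as stated (it presupposes $I$ bounded below, which is what is being established), and it is unnecessary: once one has the quantitative bound \eqref{ergo1}, smallness of $S$ is excluded through the product $SI$ itself, and no control near $\{Q=0\}$ is required because the exceptional set is handled through $S$ and $I$ only. With step (iii) replaced by the estimate \eqref{ergo1} (or an equivalent explicit lower bound) and the bookkeeping above, your outline becomes essentially the paper's proof.
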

\begin{proof}
Motivated by the proof of Lemma 3.2 in \cite{11}, we briefly verify the Feller property of the stochastic model (\ref{s2}). The
main purpose of the next step is to prove that (\ref{imp}) is impossible. Applying the generalized Itô’s formula to $\ln I-\frac{\beta}{\mu_1}(X-S)$, we easily derive
\begin{align*}
\text{d}\Bigg\{\ln I(t)-\frac{\beta}{\mu_1}\Big(X(t)-S(t)\Big)\Bigg\}&=\bigg(\beta S(t)-(\mu_2+\delta+\gamma)-\frac{\sigma_2^2}{2}-\frac{\sigma_{\beta}^2}{2}S^2(t)-\int_Z \eta_2(u)-\ln(1+\eta_2(u))\nu(\textup{d}u)\bigg)\textup{d}t\\&\;\;\;-\frac{\beta}{\mu_1}\Big(-\mu_1(X(t)-S(t))+\beta S(t)I(t)-\gamma I(t)-kQ(t)\Big)\textup{d}t+\sigma_2\textup{d}\mathcal{W}_2(t)\\&\;\;\;+\int_Z \ln(1+\eta_2(u))\widetilde{\mathcal{N}}(\textup{d}t,\textup{d}u)+\sigma_{\beta} S(t)\textup{d}\mathcal{W}_{\beta}(t)-\frac{\beta}{\mu_1}\sigma_{\beta} S(t)I(t)\textup{d}\mathcal{W}_{\beta}(t)\nonumber\\&\;\;\;-\frac{\beta}{\mu_1}\bar{\mathcal{P}}_2(t)-\frac{\beta}{\mu_1}\mathcal{P}_3(t).
\end{align*}
Then
\begin{align}
\text{d}\Bigg\{\ln I(t)-\frac{\beta}{\mu_1}\Big(X(t)-S(t)\Big)\Bigg\}&\geq\bigg(\beta X(t)-(\mu_2+\delta+\gamma)-\frac{\sigma_2^2}{2}-\frac{\sigma_{\beta}^2}{2}S^2(t)-\int_Z \eta_2(u)-\ln(1+\eta_2(u))\nu(\textup{d}u)\bigg)\textup{d}t\nonumber\\&\;\;\;-\frac{\beta^2 }{\mu_1} S(t)I(t)\textup{d}t+\sigma_2\textup{d}\mathcal{W}_2(t)+\int_Z \ln(1+\eta_2(u))\widetilde{\mathcal{N}}(\textup{d}t,\textup{d}u)+\sigma_{\beta} S(t)\textup{d}\mathcal{W}_{\beta}(t)\nonumber\\&\;\;\;-\frac{\beta}{\mu_1}\sigma_{\beta} S(t)I(t)\textup{d}\mathcal{W}_{\beta}(t)-\frac{\beta}{\mu_1}\bar{\mathcal{P}}_2(t)-\frac{\beta}{\mu_1}\mathcal{P}_3(t).
\label{ito}
\end{align}
Integrating from $0$ to $t$ on both sides of (\ref{ito}) yields
\begin{align*}
&\ln\frac{I(t)}{I(0)}-\frac{\beta}{\mu_1}\Big(X(t)-S(t)\Big)+\frac{\beta}{\mu_1}\Big(X(0)-S(0)\Big)\\&\geq\int^t_0\bigg(\beta X(s)-(\mu_2+\delta+\gamma)-\frac{\sigma_2^2}{2}-\frac{\sigma_{\beta}^2}{2}S^2(s)-\int_Z \eta_2(u)-\ln(1+\eta_2(u))\nu(\textup{d}u)\bigg)\textup{d}s\nonumber\\&\;\;\;-\frac{\beta^2 }{\mu_1}\int^t_0 S(s)I(s)\textup{d}s+\sigma_2W_2(t)+\int^t_0\int_Z \ln(1+\eta_2(u))\widetilde{\mathcal{N}}(\textup{d}s,\textup{d}u)+\sigma_{\beta} \int^t_0S(s)\textup{d}\mathcal{W}_{\beta}(s)\nonumber\\&\;\;\;-\frac{\beta\sigma_{\beta}}{\mu_1}\int^t_0 S(s)I(s)\textup{d}\mathcal{W}_{\beta}(s)-\frac{\beta\sigma_2}{\mu_1}\int^t_0 I(s) \textup{d}\mathcal{W}_2(s)-\frac{\beta}{\mu_1}\int^t_0\int_Z \eta_2(u)I(s^{-})\widetilde{\mathcal{N}}(\textup{d}s,\textup{d}u)\nonumber\\&\;\;\;-\frac{\beta\sigma_3}{\mu_1}\int^t_0 Q(s) \textup{d}\mathcal{W}_3(s)-\frac{\beta}{\mu_1}\int^t_0\int_Z \eta_3(u)Q(s^{-})\widetilde{\mathcal{N}}(\textup{d}s,\textup{d}u).
\end{align*}
Hence
\begin{align}
&\int^t_0\beta S(s)I(s)\textup{d}s\nonumber\\&\geq \frac{\mu_1}{\beta}\int^t_0\bigg(\beta X(s)-(\mu_2+\delta+\gamma)-\frac{\sigma_2^2}{2}-\frac{\sigma_{\beta}^2}{2}S^2(s)-\int_Z \eta_2(u)-\ln(1+\eta_2(u))\nu(\textup{d}u)\bigg)\textup{d}s\nonumber\\&\;\;\;-\frac{\mu_1}{\beta}\ln\frac{I(t)}{I(0)}+\Big(X(t)-S(t)\Big)-\Big(X(0)-S(0)\Big)\nonumber+\frac{\mu_1}{\beta}\sigma_2W_2(t)+\frac{\mu_1}{\beta}\int^t_0\int_Z \ln(1+\eta_2(u))\widetilde{\mathcal{N}}(\textup{d}s,\textup{d}u)\nonumber\\&\;\;\;+\frac{\mu_1\sigma_{\beta} }{\beta}\int^t_0S(s)\textup{d}\mathcal{W}_{\beta}(s)-\sigma_{\beta}\int^t_0 S(s)I(s)\textup{d}\mathcal{W}_{\beta}(s)-\sigma_2\int^t_0 I(s) \textup{d}\mathcal{W}_2(s)-\int^t_0\int_Z \eta_2(u)I(s^{-})\widetilde{\mathcal{N}}(\textup{d}s,\textup{d}u)\nonumber\\&\;\;\;-\beta\sigma_3\int^t_0 Q(s) \textup{d}\mathcal{W}_3(s)-\int^t_0\int_Z \eta_3(u)Q(s^{-})\widetilde{\mathcal{N}}(\textup{d}s,\textup{d}u).
\label{11}
\end{align}
From Remark \ref{comp} and Lemma \ref{lem1m}, one can derive that 
\begin{align*}
\underset{t\to\infty}{\lim}\frac{X(t)}{t}=0,\hspace{0.2cm}  \underset{t\to\infty}{\lim}\frac{S(t)}{t}=0,\hspace{0.2cm}\mbox{and}\hspace{0.2cm}
\underset{t\to\infty}{\lim}\frac{1}{t}\int^t_0\int_Z (\eta_2(u) I(s^{-})+ \eta_3(u)Q(s^{-}))\widetilde{\mathcal{N}}(\textup{d}s,\textup{d}u)\hspace{0.5cm}\mbox{a.s.}
\end{align*}
Moreover, 
\begin{align*}
\underset{t\to\infty}{\lim}\frac{1}{ t} \int^t_0S(s)\textup{d}\mathcal{W}_{\beta}(s)=0,\;\;\;  \frac{1}{t}\int^t_0I(s)\textup{d}\mathcal{W}_2(s)=0\hspace{0.2cm}\mbox{and}\hspace{0.2cm} \underset{t\to\infty}{\lim} \frac{1}{t}\int^t_0Q(s)\textup{d}\mathcal{W}_3(s)=0\hspace{0.5cm}\mbox{a.s.}
\end{align*}
Application of the strong law of large numbers and assumption \textbf{($\textup{A}_3$)} shows that
$$\underset{t\to \infty}{\lim} \frac{W_2(t)}{ t}=0\hspace{0.2cm}\mbox{and}\hspace{0.2cm}\underset{t\to \infty}{\lim}\frac{1}{ t}\int^t_0\int_Z\ln(1+\eta_2(u))\tilde{\mathcal{N}}(\textup{d}s,\textup{d}u)=0\hspace{0.5cm}\mbox{a.s.}$$
Applying similar arguments to those in the proof of Lemma \ref{lem22}, we obtain
\begin{align*}
\underset{t\to \infty}{\lim}\frac{1}{t}\int^t_0S(s)I(s)\textup{d}\mathcal{W}_{\beta}(s)=0\hspace{0.5cm}\mbox{a.s.}
\end{align*}
Since $\underset{t\to \infty}{\lim \sup}\frac{1}{t}\ln\frac{I(t)}{I(0)}\leq \underset{t\to \infty}{\lim \sup}\frac{1}{t}\ln\frac{N(t)}{I(0)}\leq 0$ a.s., one can derive that
\begin{align*}
&\underset{t\to \infty}{\lim \inf} \frac{1}{t}\int^t_0 \beta S(s)I(s)\textup{d}s\nonumber\\&\geq \frac{\mu_1}{\beta}\underset{t\to \infty}{\lim \inf} \frac{1}{t}\int^t_0\bigg(\beta X(s)-(\mu_2+\delta+\gamma)-\frac{\sigma_2^2}{2}-\frac{\sigma_{\beta}^2}{2}X^2(s)-\int_Z \eta_2(u)-\ln(1+\eta_2(u))\nu(\textup{d}u)\bigg)\textup{d}s\nonumber\\
&=\frac{\mu_1}{\beta}\underset{t\to \infty}{\lim } \frac{1}{t}\int^t_0\beta X(s)\textup{d}s-\frac{\sigma_{\beta}^2}{2}\underset{t\to \infty}{\lim } \frac{1}{t}\int^t_0X^2(s)\textup{d}s-\Big(\mu_2+\delta+\gamma+\frac{\sigma_2^2}{2}\Big)-\int_Z \eta_2(u)-\ln(1+\eta_2(u))\nu(\textup{d}u).
\end{align*}
From Lemma \ref{lemmas}, it follows that
\begin{align}
\underset{t\to \infty}{\lim \inf} \frac{1}{t}\int^t_0 \beta S(s)I(s)\textup{d}s&\geq \frac{\mu_1}{\beta}\times\Bigg(\frac{\beta A}{\mu_1}-\frac{A^2\sigma_{\beta}^2}{\mu_1\chi}-\Big(\mu_2+\delta+\gamma+\frac{\sigma_2^2}{2}\Big)-\int_Z \eta_2(u)-\ln(1+\eta_2(u))\nu(\textup{d}u)\Bigg)\nonumber\\&=\frac{\mu_1}{\beta}\Big(\mu_2+\delta+\gamma+\frac{\sigma_2^2}{2}\Big)\Big(\mathcal{R}_0^s-1\Big)>0\hspace{0.5cm}\mbox{a.s.}
\label{ergo1}
\end{align}
To continue our analysis, we need to set the following subsets: 
\begin{align*}
\Omega_1&=\{(S,I,Q)\in\R^3_+|\hspace{0.1cm}S\geq \epsilon,\hspace{0.1cm}\mbox{and},\hspace{0.1cm} I\geq \epsilon\},\\
\Omega_2&=\{(S,I,Q)\in\R^3_+|\hspace{0.1cm}S\leq \epsilon\},\\
\Omega_3&=\{(S,I,Q)\in\R^3_+|\hspace{0.1cm}I\leq \epsilon\},
\end{align*}
where $\epsilon>0$ is a positive constant to be determined later. Therefore, by (\ref{ergo1}), we get
\begin{align*}
&\underset{t\to \infty}{\lim \inf}\frac{1}{t}\int^t_0\E\Big( \beta S(s)I(s)\mathbf{1}_{\Omega_1}\Big)\textup{d}s\\&\geq \underset{t\to \infty}{\lim \inf}\frac{1}{t}\int^t_0\E\Big( \beta S(s)I(s)\Big)\textup{d}s-\underset{t\to \infty}{\lim \sup}\frac{1}{t}\int^t_0\E\Big( \beta S(s)I(s)\mathbf{1}_{\Omega_2}\Big)\textup{d}s-\underset{t\to \infty}{\lim \sup}\frac{1}{t}\int^t_0\E\Big( \beta S(s)I(s)\mathbf{1}_{\Omega_3}\Big)\textup{d}s\\&\geq \frac{\mu_1}{\beta}\Big(\mu_2+\delta+\gamma+\frac{\sigma_2^2}{2}\Big)\Big(\mathcal{R}_0^s-1\Big)-\beta \epsilon \underset{t\to \infty}{\lim \sup}\frac{1}{t}\int^t_0\E\big[I(s)\big]\textup{d}s-\beta \epsilon \underset{t\to \infty}{\lim \sup}\frac{1}{t}\int^t_0\E\big[S(s)\big]\textup{d}s.
\end{align*}
Then, one can see that
\begin{align*}
\underset{t\to \infty}{\lim \inf}\frac{1}{t}\int^t_0\E\Big( \beta S(s)I(s)\mathbf{1}_{\Omega_1}\Big)\textup{d}s&\geq \frac{\mu_1}{\beta}\Big(\mu_2+\delta+\gamma+\frac{\sigma_2^2}{2}\Big)\Big(\mathcal{R}_0^s-1\Big)- \frac{2A\beta \epsilon}{\mu_1}.
\end{align*}
We can choose $\epsilon \leq M_1$, and then we obtain
\begin{align}
\underset{t\to \infty}{\lim \inf}\frac{1}{t}\int^t_0\E\Big( \beta S(s)I(s)\mathbf{1}_{\Omega_1}\Big)\textup{d}s\geq \frac{\mu_1}{2\beta}\Big(\mu_2+\delta+\gamma+\frac{\sigma_2^2}{2}\Big)\Big(\mathcal{R}_0^s-1\Big)>0.
\label{22}
\end{align}
Let $q=a_0>1$ be a positive integer and $1<p=\frac{a_0}{a_0-1}$ such that $\Gamma_{2,p}> 0$ and $\frac{1}{q}+\frac{1}{p}=1$. By utilizing the Young inequality $xy\leq \frac{x^p}{p}+\frac{y^q}{q}$ for all $x$,$y>0$, we get
\begin{align*}
\underset{t\to \infty}{\lim \inf}\frac{1}{t}\int^t_0\E\Big( \beta S(s)I(s)\mathbf{1}_{\Omega_1}\Big)\textup{d}s&\leq \underset{t\to \infty}{\lim \inf}\frac{1}{t}\int^t_0\E\bigg(p^{-1}(\eta \beta S(s)I(s))^p+q^{-1}\eta^{-q}\mathbf{1}_{\Omega_1}\bigg)\textup{d}s\\&\leq \underset{t\to \infty}{\lim \inf}\frac{1}{t}\int^t_0 \E\Big(q^{-1}\eta^{-q}\mathbf{1}_{\Omega_1}\Big)\textup{d}s+p^{-1}(\eta \beta)^p\underset{t\to \infty}{\lim \sup}\frac{1}{t}\int^t_0\E\Big[N^{2p}(s)\Big]\textup{d}s,
\end{align*}
where $\eta$ is a positive constant satisfying $\eta^p\leq M_2$. By Lemma \ref{L1} and (\ref{22}), we deduce that
\begin{align}
\underset{t\to \infty}{\lim \inf}\frac{1}{t}\int^t_0\E\big[\mathbf{1}_{\Omega_1}\big]\textup{d}s&\geq q\eta^q\Bigg(\frac{\mu_1}{2\beta}\Big(\mu_2+\delta+\gamma+\frac{\sigma_2^2}{2}\Big)\Big(\mathcal{R}_0^s-1\Big)-\frac{2\eta^p\beta^p\Gamma_{2,p}}{p\Delta}\Bigg)\nonumber\\&\geq \frac{\mu_1 q\eta^q}{4\beta}\Big(\mu_2+\delta+\gamma+\frac{\sigma_2^2}{2}\Big)\Big(\mathcal{R}_0^s-1\Big)>0.
\label{23}
\end{align}
Setting $\Omega_4=\{(S,I,Q)\in\R^3_+|\hspace{0.1cm}S\geq \zeta,\hspace{0.1cm}\mbox{or},\hspace{0.1cm} I\geq \zeta\}$ and $\Sigma=\{(S,I,Q)\in\R^3_+|\hspace{0.1cm}\epsilon \leq S\leq \zeta,\hspace{0.1cm}\mbox{and},\hspace{0.1cm} \epsilon \leq I\leq \zeta\}$ where $\zeta>\epsilon$ is a positive constant to be explained in the following. By using the Tchebychev inequality, we can observe that
\begin{align*}
\E[\mathbf{1}_{\Omega_4}]\leq \P(S(t)\geq \zeta )+\P(I(t)\geq \zeta )&\leq\frac{1}{\zeta}\E[S(t)+I(t)]\leq\frac{1}{\zeta}\bigg(\frac{2A}{\mu_1}+N(0)\bigg).
\end{align*}
Choosing $\frac{1}{\zeta}\leq M_3\eta^q.$ We thus obtain
\begin{align*} 
\underset{t\to \infty}{\lim \sup}\frac{1}{t}\int^t_0\E[\mathbf{1}_{\Omega_4}]\textup{d}s&\leq \frac{\mu_1 q\eta^q}{8\beta}\Big(\mu_2+\delta+\gamma+\frac{\sigma_2^2}{2}\Big)\Big(\mathcal{R}_0^s-1\Big).
\end{align*}
According to (\ref{23}), one can derive that
\begin{align*}
\underset{t\to \infty}{\lim \inf}\frac{1}{t}\int^t_0\E[\mathbf{1}_{\Sigma}]\textup{d}s&\geq \underset{t\to \infty}{\lim \inf}\frac{1}{t}\int^t_0\E[\mathbf{1}_{\Omega_1}]\textup{d}s-\underset{t\to \infty}{\lim \sup}\frac{1}{t}\int^t_0\E[\mathbf{1}_{\Omega_4}]\textup{d}s\\&\;\;\; \geq\frac{\mu_1 q\eta^q}{8\beta}\Big(\mu_2+\delta+\gamma+\frac{\sigma_2^2}{2}\Big)\Big(\mathcal{R}_0^s-1\Big)>0.
\end{align*}
Based on the above analysis, 
we have determined a compact domain $\Sigma\subset \R^3_+$ such that
\begin{align*}
\underset{t\to \infty}{\lim \inf}\frac{1}{t}\int^t_0\P\Big(s,Y(0),\Sigma\Big)\textup{d}s\geq \frac{\mu_1 q\eta^q}{8\beta}\Big(\mu_2+\delta+\gamma+\frac{\sigma_2^2}{2}\Big)\Big(\mathcal{R}_0^s-1\Big)>0.
\end{align*}
Applying similar arguments to those in Theorem 5.1 of \cite{12}, we show the uniqueness of the ergodic stationary distribution of our model (\ref{s2}). This completes the proof.
\end{proof}
\begin{thm}
If $\mathcal{R}^s_0>1$, then for any value $Y(0)\in\R^3_+$, the disease is persistent in the mean. That is to say 
\begin{align*}
\underset{t\to\infty}{\lim\inf}\frac{1}{t}\int^t_0 I(s)\textup{d}s>0\hspace{0.2cm}\mbox{a.s.}
\end{align*}
\label{persis}
\end{thm}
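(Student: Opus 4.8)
The plan is to extract persistence in the mean directly from the $I$-equation of \eqref{s2}, feeding in the lower bound \eqref{ergo1} on $\frac1t\int_0^t\beta S(s)I(s)\,\textup{d}s$ that has already been obtained in the course of proving Theorem \ref{thm1}. The point is that $I(t)$ satisfies a linear-type equation in which $\int_0^t\beta S I\,\textup{d}s$ enters with a positive sign and $\int_0^t I\,\textup{d}s$ with a negative sign, so a lower bound on the time-average of the former translates into a lower bound on the time-average of the latter, provided the boundary term and the martingale terms are shown to be negligible after division by $t$.

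Concretely, I would integrate the second line of \eqref{s2} from $0$ to $t$, divide by $(\mu_2+\delta+\gamma)\,t$, and solve for the time average of $I$, obtaining
\begin{align*}
\frac{1}{t}\int_0^t I(s)\,\textup{d}s&=\frac{1}{\mu_2+\delta+\gamma}\Bigg(\frac{\beta}{t}\int_0^t S(s)I(s)\,\textup{d}s-\frac{I(t)-I(0)}{t}+\frac{\sigma_2}{t}\int_0^t I(s)\,\textup{d}\mathcal{W}_2(s)\\
&\qquad\qquad\qquad+\frac{1}{t}\int_0^t\!\!\int_Z\eta_2(u)I(s^{-})\widetilde{\mathcal{N}}(\textup{d}s,\textup{d}u)+\frac{\sigma_\beta}{t}\int_0^t S(s)I(s)\,\textup{d}\mathcal{W}_\beta(s)\Bigg).
\end{align*}
Next I would argue that every term except the first one inside the parentheses vanishes almost surely as $t\to\infty$: the boundary term $\frac{I(t)-I(0)}{t}$ by Remark \ref{comp} with $n=1$; the $\mathcal{W}_2$-integral and the $\widetilde{\mathcal{N}}$-integral by Lemma \ref{lem1m}; and the $\mathcal{W}_\beta$-integral with integrand $SI$ by the same Burkholder--Davis--Gundy / Borel--Cantelli argument used in Lemma \ref{lem22}, which was already invoked inside the proof of Theorem \ref{thm1} and which is legitimate because $\E\big[(S(t)I(t))^p\big]\le\E\big[N^{2p}(t)\big]\le\E\big[X^{2p}(t)\big]\le\bar{M}$ by Remark \ref{comp} and Step~1 of Lemma \ref{lem22}.

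Taking $\liminf_{t\to\infty}$ on both sides and inserting the estimate \eqref{ergo1} then gives
\begin{align*}
\underset{t\to\infty}{\lim\inf}\frac{1}{t}\int_0^t I(s)\,\textup{d}s=\frac{1}{\mu_2+\delta+\gamma}\,\underset{t\to\infty}{\lim\inf}\frac{\beta}{t}\int_0^t S(s)I(s)\,\textup{d}s\ge\frac{\mu_1}{\beta(\mu_2+\delta+\gamma)}\Big(\mu_2+\delta+\gamma+\frac{\sigma_2^2}{2}\Big)\big(\mathcal{R}_0^s-1\big)\quad\textup{a.s.},
\end{align*}
which is strictly positive precisely because $\mathcal{R}_0^s>1$, establishing the claimed persistence in the mean.

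I do not expect any deep obstacle: the driving inequality \eqref{ergo1} and the vanishing of the auxiliary averages are already packaged in the earlier lemmas and in the proof of Theorem \ref{thm1}. The only point deserving a line of care is the $\mathcal{W}_\beta$-stochastic integral whose integrand is the product $S(s)I(s)$ rather than a single coordinate process; its negligibility is not literally among the statements of Lemma \ref{lem1m}, but it follows from the uniform moment bound behind Lemma \ref{lem22} exactly as it was used in the proof of Theorem \ref{thm1}, and hence can simply be cited rather than re-derived.
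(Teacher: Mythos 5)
Your argument is correct, but it is not the route the paper takes. You integrate the $I$-equation of (\ref{s2}) directly, solve for $\frac{1}{t}\int_0^t I(s)\,\textup{d}s$, kill the boundary and martingale terms (Remark \ref{comp}, Lemma \ref{lem1m}, and the Burkholder--Davis--Gundy/Borel--Cantelli argument for the $\mathcal{W}_\beta$-integral with integrand $SI$ --- the same fact the paper itself invokes inside Theorem \ref{thm1}, so citing it is fair), and then feed in the bound (\ref{ergo1}) on $\liminf_{t\to\infty}\frac{1}{t}\int_0^t\beta S(s)I(s)\,\textup{d}s$ established in the proof of Theorem \ref{thm1}; since that bound is almost sure and strictly positive when $\mathcal{R}_0^s>1$, your conclusion follows. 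The paper instead gives a self-contained derivation: it applies Itô's formula to $\ln I$, and, by integrating the total-population balance together with the $Q$-equation, obtains the identity (\ref{S(t)}) expressing $\frac{1}{t}\int_0^t S(s)\,\textup{d}s$ as $\frac{A}{\mu_1}$ minus a positive multiple of $\frac{1}{t}\int_0^t I(s)\,\textup{d}s$ plus vanishing terms; combining this with $S\le X$, Lemma \ref{lemmas} for $\frac{1}{t}\int_0^t X^2(s)\,\textup{d}s$, and $\limsup_{t\to\infty}\frac{\ln I(t)}{t}\le 0$ yields the lower bound $\frac{1}{\beta}\big(\frac{\mu_2}{\mu_1}+\frac{\delta\mu_3}{\mu_1(\mu_3+k)}\big)^{-1}\big(\mu_2+\delta+\gamma+\frac{\sigma_2^2}{2}\big)(\mathcal{R}_0^s-1)$. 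What each buys: your proof is shorter and cleanly reuses work already done, at the price of depending on an estimate buried inside another theorem's proof rather than a stated lemma; the paper's version is independent of Theorem \ref{thm1} and its constant is slightly sharper, since $\mu_2+\frac{\delta\mu_3}{\mu_3+k}\le\mu_2+\delta+\gamma$ makes its bound at least as large as your $\frac{\mu_1}{\beta(\mu_2+\delta+\gamma)}\big(\mu_2+\delta+\gamma+\frac{\sigma_2^2}{2}\big)(\mathcal{R}_0^s-1)$ --- though for the statement as given (strict positivity) either constant suffices.
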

\begin{proof}
From model (\ref{s2}) it yields
\begin{align}
\text{d}(S(t)+I(t)+Q(t))&=\big(A-\mu_1S(t)-\mu_2 I(t)-\mu_3Q(t)\big)\textup{d}t+\bar{\mathcal{P}}_1(t)+\bar{\mathcal{P}}_2(t)+\mathcal{P}_3(t).
\label{bymod}
\end{align}
Integrating (\ref{bymod}) from $0$ to $t$, and then dividing $t$ on both sides, we get
\begin{align*}
&\frac{1}{t}\Big((S(t)+I(t)+Q(t))-(S(0)+I(0)+Q(0))\Big)\\&=A-\frac{\mu_1}{t}\int^t_0 S(s)\textup{d}s-\frac{\mu_2}{t} \int^t_0I(s)\textup{d}s-\frac{\mu_3}{t}\int^t_0Q(s)\textup{d}s+\frac{\sigma_1}{t}\int^t_0 S(s) \textup{d}\mathcal{W}_1(s)+\frac{1}{t}\int^t_0\int_Z \eta_1(u)S(s^{-})\widetilde{\mathcal{N}}(\textup{d}s,\textup{d}u)\nonumber\\&\;\;\;+\frac{\sigma_2}{t}\int^s_0 I(s) \textup{d}\mathcal{W}_2(s)+\frac{1}{t}\int^t_0\int_Z \eta_2(u)I(s^{-})\widetilde{\mathcal{N}}(\textup{d}s,\textup{d}u)+\frac{\sigma_3}{t}\int^t_0 Q(s) \textup{d}\mathcal{W}_3(s)+\frac{1}{t}\int^t_0\int_Z \eta_3(u)Q(s^{-})\widetilde{\mathcal{N}}(\textup{d}s,\textup{d}u).
\end{align*}
Taking the integration for the third equation of model (\ref{s2}) yields
\begin{align}
Q(t)-Q(0)=\delta\int^t_0I(s)\textup{d}s-(\mu_3+k)\int^t_0Q(s)\textup{d}s+\sigma_3\int^t_0 Q(s) \textup{d}\mathcal{W}_3(s)+\int^t_0\int_Z \eta_3(u)Q(t^{-})\widetilde{\mathcal{N}}(\textup{d}s,\textup{d}u).
\label{take}
\end{align}
Dividing $t$ on both sides of equation (\ref{take}), we have
\begin{align*}
\frac{1}{t}\int^t_0Q(s)\textup{d}s&=\frac{\delta}{(\mu_3+k)}\frac{1}{t}\int^t_0I(s)\textup{d}s+\frac{\sigma_3}{(\mu_3+k)}\frac{1}{t}\int^t_0Q(s)\textup{d}\mathcal{W}_3(s)\\&\;\;\;+\frac{1}{(\mu_3+k)}\int^t_0\int_Z \eta_3(u)Q(s^{-})\widetilde{\mathcal{N}}(\textup{d}s,\textup{d}u)-\frac{1}{(\mu_3+k)t}(Q(t)-Q(0)).
\end{align*}
Then, one can obtain that 
\begin{align}
\frac{1}{t}\int^t_0S(s)\textup{d}s=\frac{A}{\mu_1}-\frac{1}{t}\Bigg(\frac{\mu_2}{\mu_1}+\frac{\delta \mu_3}{\mu_1(\mu_3+k)}\Bigg)\int^t_0I(s)\textup{d}s+\Phi_1(t),
\label{S(t)}
\end{align}
where
\begin{align*}
\Phi_1(t)&=\frac{\sigma_3 \mu_3}{\mu_1(\mu_3+k)t}\int^t_0Q(s)\textup{d}\mathcal{W}_3(s)+\frac{\mu_3}{\mu_1(\mu_3+k)t}\int^t_0\int_Z \eta_3(u)Q(s^{-})\widetilde{\mathcal{N}}(\textup{d}s,\textup{d}u)-\frac{1}{(\mu_3+k)t}(Q(t)-Q(0))\\&\;\;\;+\frac{\sigma_1}{\mu_1t}\int^t_0 S(s) \textup{d}\mathcal{W}_1(s)+\frac{1}{\mu_1t}\int^t_0\int_Z \eta_1(u)S(s^{-})\widetilde{\mathcal{N}}(\textup{d}s,\textup{d}u)+\frac{\sigma_2}{\mu_1t}\int^t_0 I(s) \textup{d}\mathcal{W}_2(s)\nonumber\\&\;\;\;+\frac{1}{\mu_1t}\int^t_0\int_Z \eta_2(u)I(s^{-})\widetilde{\mathcal{N}}(\textup{d}s,\textup{d}u)+\frac{\sigma_3}{\mu_1t}\int^t_0 Q(s) \textup{d}\mathcal{W}_3(s)+\frac{1}{\mu_1t}\int^t_0\int_Z \eta_3(u)Q(s^{-})\widetilde{\mathcal{N}}(\textup{d}s,\textup{d}u)\\&\;\;\;-\frac{1}{\mu_1t}\Big((S(t)+I(t)+Q(t))-(S(0)+I(0)+Q(0))\Big).
\end{align*}
Applying Itô’s formula to the second equation of (\ref{s2}), we get
\begin{align}
\text{d}\ln I(t)&=\Bigg(\beta S(t)-(\mu_2+\delta+\gamma)-\frac{\sigma_2^2}{2}-\frac{\sigma_{\beta}^2}{2}S^2(t)-\int_Z \eta_2(u)-\ln(1+\eta_2(u))\nu(\textup{d}u)\Bigg)\textup{d}t\nonumber\\&\;\;\;+\sigma_2\textup{d}\mathcal{W}_2(t)+\int_Z \ln(1+\eta_2(u))\widetilde{\mathcal{N}}(\textup{d}t,\textup{d}u)+\sigma_{\beta} S(t)\textup{d}\mathcal{W}_{\beta}(t).
\label{205}
\end{align}
Integrating (\ref{205}) from $0$ to $t$ and then dividing $t$ on both sides, we have
\begin{align*}
\frac{1}{t}(\ln I(t)-\ln I(0))&=\frac{\beta}{t}\int^t_0 S(s)\textup{d}s-(\mu_2+\delta+\gamma)-\frac{\sigma_2^2}{2}-\frac{\sigma_{\beta}^2}{2t}\int^t_0S^2(s)\textup{d}s-\int_Z \eta_2(u)-\ln(1+\eta_2(u))\nu(\textup{d}u)\nonumber\\&\;\;\;+\sigma_2\frac{W_2(t)}{t}+\frac{1}{t}\int^t_0\int_Z \ln(1+\eta_2(u))\widetilde{\mathcal{N}}(\textup{d}s,\textup{d}u)+\frac{\sigma_{\beta}}{t} \int^t_0S(s)\textup{d}\mathcal{W}_{\beta}(s).
\end{align*}
From (\ref{S(t)}), we get
\begin{align*}
\frac{1}{t}(\ln I(t)-\ln I(0))&=\frac{\beta A}{\mu_1}-\frac{\beta}{t}\Bigg(\frac{\mu_2}{\mu_1}+\frac{\delta \mu_3}{\mu_1(\mu_3+k)}\Bigg)\int^t_0I(s)\textup{d}s+\beta\Phi_1(t)-(\mu_2+\delta+\gamma)\\&\;\;\;-\frac{\sigma_2^2}{2}-\frac{\sigma_{\beta}^2}{2}\int^t_0S^2(s)\textup{d}s-\int_Z \eta_2(u)-\ln(1+\eta_2(u))\nu(\textup{d}u)\nonumber\\&\;\;\;+\sigma_2\frac{W_2(t)}{t}+\frac{1}{t}\int^t_0\int_Z \ln(1+\eta_2(u))\widetilde{\mathcal{N}}(\textup{d}s,\textup{d}u)+\frac{\sigma_{\beta}}{t} \int^t_0S(s)\textup{d}\mathcal{W}_{\beta}(s).
\end{align*}
Since $S(t)\leq X(t)$ a.s., we obtain
\begin{align*}
\frac{1}{t}(\ln I(t)-\ln I(0))&\geq \frac{\beta A}{\mu_1}-\frac{\beta}{t}\Bigg(\frac{\mu_2}{\mu_1}+\frac{\delta \mu_3}{\mu_1(\mu_3+k)}\Bigg)\int^t_0I(s)\textup{d}s+\beta\phi_1(t)-(\mu_2+\delta+\gamma)\\&\;\;\;-\frac{\sigma_2^2}{2}-\frac{\sigma_{\beta}^2}{2}\int^t_0X^2(s)\textup{d}s-\int_Z \eta_2(u)-\ln(1+\eta_2(u))\nu(\textup{d}u)\nonumber\\&\;\;\;+\sigma_2\frac{W_2(t)}{t}+\frac{1}{t}\int^t_0\int_Z \ln(1+\eta_2(u))\widetilde{\mathcal{N}}(\textup{d}s,\textup{d}u)+\frac{\sigma_{\beta}}{t} \int^t_0S(s)\textup{d}\mathcal{W}_{\beta}(s).
\end{align*}
Hence, we further have
\begin{align*}
\frac{\beta}{t}\Bigg(\frac{\mu_2}{\mu_1}+\frac{\delta \mu_3}{\mu_1(\mu_3+k)}\Bigg)\int^t_0I(s)\textup{d}s&\geq -\frac{1}{t}(\ln I(t)-\ln I(0))+\frac{\beta A}{\mu_1}+\beta\phi_1(t)-(\mu_2+\delta+\gamma)\\&\;\;\;-\frac{\sigma_2^2}{2}-\frac{\sigma_{\beta}^2}{2}\int^t_0X^2(s)\textup{d}s-\int_Z \eta_2(u)-\ln(1+\eta_2(u))\nu(\textup{d}u)\nonumber\\&\;\;\;+\sigma_2\frac{W_2(t)}{t}+\frac{1}{t}\int^t_0\int_Z \ln(1+\eta_2(u))\widetilde{\mathcal{N}}(\textup{d}s,\textup{d}u)+\frac{\sigma_{\beta}}{t} \int^t_0S(s)\textup{d}\mathcal{W}_{\beta}(s).
\end{align*}
By assumption \textbf{($A_3$)}, Lemmas \ref{lem1m} - \ref{lemmas}, and the large number theorem for martingales, we can easily verify that
\begin{align*}
&\underset{t\to\infty }{\liminf}\frac{1}{ t}\int^t_0I(s)\textup{d}s\geq\frac{1}{\beta} \Bigg(\frac{\mu_2}{\mu_1}+\frac{\delta \mu_3}{\mu_1(\mu_3+k)}\Bigg)^{-1}\Big(\mu_2+\delta+\gamma+\frac{\sigma_2^2}{2}\Big)(\mathcal{R}^s_0-1)>0\hspace{0.2cm}\mbox{a.s.}
\end{align*}
This shows that the system (\ref{s2}) is persistent in the mean with probability one. This completes the proof.
\end{proof}
\subsection{The extinction of the disease}
Now, we will give the result on the extinction of the disease. Define
\begin{align*}
\mathcal{\hat{R}}^s_0&=\Big(\mu_2+\delta+\gamma+\frac{\sigma^2_2}{2}\Big)^{-1}\bigg(\frac{\beta A}{\mu_1}-\frac{\sigma_{\beta}^2A^2}{2\mu_1^2}-\int_Z\eta_2(u)-\ln(1+\eta_2(u))\nu(\textup{d}u)\bigg).
\end{align*}
\begin{thm}
Let $Y(t)$ be the solution of system (\ref{s2}) with initial value $Y(0)\in\R^3_+$.\\ If 
\begin{align}
&\hat{\mathcal{R}}^s_0<1\hspace{0.2cm}\mbox{and} \hspace{0.2cm} \sigma_{\beta}^2\leq \frac{\mu_1\beta}{A},&
\label{cond1}
\end{align}
or
\begin{align}
&\frac{\beta^2}{2\sigma_{\beta}^2}-\Big(\mu_2+\delta+\gamma+\frac{\sigma^2_2}{2}\Big)-\int_Z \eta_2(u)-\ln(1+\eta_2(u))\nu(\textup{d}u)<0,&
\label{cond2}
\end{align}
then the disease dies out exponentially with probability one. That is to say,
\begin{align}
\underset{t\to \infty}{\lim \sup}\frac{\ln I(t)}{t}<0\hspace*{0.3cm}\mbox{a.s.}
\label{ext}
\end{align}
\label{thm2}
\end{thm}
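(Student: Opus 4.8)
The plan is to start from the Itô expansion of $\ln I(t)$ given in equation \eqref{205}, integrate it from $0$ to $t$, and divide by $t$. This produces
\[
\frac{\ln I(t)-\ln I(0)}{t}=\frac{\beta}{t}\int_0^t S(s)\,\textup{d}s-\Big(\mu_2+\delta+\gamma+\tfrac{\sigma_2^2}{2}\Big)-\frac{\sigma_\beta^2}{2t}\int_0^t S^2(s)\,\textup{d}s-\int_Z\big[\eta_2(u)-\ln(1+\eta_2(u))\big]\nu(\textup{d}u)+\text{(martingale terms)}/t,
\]
where the martingale remainders are $\sigma_2 W_2(t)/t$, $t^{-1}\int_0^t\int_Z\ln(1+\eta_2(u))\widetilde{\mathcal N}(\textup ds,\textup du)$ and $\sigma_\beta t^{-1}\int_0^t S(s)\,\textup{d}\mathcal W_\beta(s)$. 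By assumption \textbf{($\textup{A}_3$)}, Lemma \ref{lem1m}, the strong law of large numbers for local martingales, and Remark \ref{comp} (which bounds $S\le X$ and gives $S(t)/t\to 0$, hence the $\mathcal W_\beta$ integral vanishes too after a Lemma \ref{lem22}-type argument), all the remainder terms tend to $0$ almost surely. So the sign of $\limsup_{t\to\infty}\frac1t\ln I(t)$ is governed by $\limsup$ of the quadratic-in-$\bar S$ expression $\beta\langle S\rangle_t-\tfrac{\sigma_\beta^2}{2}\langle S^2\rangle_t$ minus the constant $\mu_2+\delta+\gamma+\tfrac{\sigma_2^2}{2}+\int_Z[\eta_2-\ln(1+\eta_2)]\nu(\textup du)$, where $\langle\cdot\rangle_t$ denotes the time average.

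For the first case \eqref{cond1}, I would combine the comparison bound $S(t)\le X(t)$ with Lemma \ref{lemmas}: that lemma gives $\langle X\rangle_t\to A/\mu_1$ and $\limsup\langle X^2\rangle_t\le 2A^2/(\mu_1\chi)$; but to get the cleaner threshold $\hat{\mathcal R}_0^s$ I would instead drop the $-\tfrac{\sigma_\beta^2}{2}\langle S^2\rangle_t$ term by Jensen's inequality in the form $\langle S^2\rangle_t\ge\langle S\rangle_t^2$, so that
\[
\beta\langle S\rangle_t-\tfrac{\sigma_\beta^2}{2}\langle S^2\rangle_t\le \beta\langle S\rangle_t-\tfrac{\sigma_\beta^2}{2}\langle S\rangle_t^2 = f(\langle S\rangle_t),\qquad f(x)=\beta x-\tfrac{\sigma_\beta^2}{2}x^2.
\]
The function $f$ is increasing on $[0,\beta/\sigma_\beta^2]$; the hypothesis $\sigma_\beta^2\le\mu_1\beta/A$ is exactly what guarantees that the limiting value $\langle S\rangle_t\to\langle X\rangle_t\to A/\mu_1$ (using $\limsup\langle S\rangle_t\le\langle X\rangle_t=A/\mu_1$) stays within this monotonicity interval, so $\limsup f(\langle S\rangle_t)\le f(A/\mu_1)=\beta A/\mu_1-\sigma_\beta^2A^2/(2\mu_1^2)$. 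Substituting back, $\limsup\frac1t\ln I(t)\le\big(\mu_2+\delta+\gamma+\tfrac{\sigma_2^2}{2}\big)\big(\hat{\mathcal R}_0^s-1\big)<0$ by $\hat{\mathcal R}_0^s<1$, which is \eqref{ext}.

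For the second case \eqref{cond2}, there is no restriction on $\sigma_\beta^2$, so I would not try to control $\langle S\rangle_t$; instead I bound $\beta S-\tfrac{\sigma_\beta^2}{2}S^2$ pointwise by its global maximum over $S\ge 0$, namely $\beta S-\tfrac{\sigma_\beta^2}{2}S^2\le \beta^2/(2\sigma_\beta^2)$. Then
\[
\frac{\ln I(t)-\ln I(0)}{t}\le \frac{\beta^2}{2\sigma_\beta^2}-\Big(\mu_2+\delta+\gamma+\tfrac{\sigma_2^2}{2}\Big)-\int_Z\big[\eta_2(u)-\ln(1+\eta_2(u))\big]\nu(\textup du)+\text{(remainders)}/t,
\]
and letting $t\to\infty$ with the remainders vanishing gives $\limsup\frac1t\ln I(t)$ bounded by the left side of \eqref{cond2}, which is negative by hypothesis. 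In both cases the conclusion $I(t)\to 0$ exponentially follows. The main obstacle I anticipate is the careful justification that every martingale remainder is $o(t)$ a.s.: the Brownian pieces are routine via the strong law for martingales with the quadratic variations controlled by Lemma \ref{L1}'s moment bounds, and the Poisson-integral piece needs assumption \textbf{($\textup{A}_3$)}, but the term $\sigma_\beta t^{-1}\int_0^t S(s)\,\textup{d}\mathcal W_\beta(s)$ requires a Burkholder–Davis–Gundy plus Borel–Cantelli argument of exactly the Lemma \ref{lem22} type applied to $S$, which is legitimate because $S\le X$ and $X$ has the uniformly bounded moments established there. A secondary subtlety in case \eqref{cond1} is making sure the Jensen step and the monotonicity interval check for $f$ interact correctly with $\limsup$ rather than $\lim$, since only $\limsup\langle S\rangle_t\le A/\mu_1$ is available for $S$ itself.
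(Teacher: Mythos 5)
Your proposal is correct and follows the paper's overall strategy: Itô's formula applied to $\ln I$, division by $t$, the martingale and jump remainders killed by Lemma \ref{lem1m} (whose first bullet already covers $\frac{1}{t}\int_0^t S\,\textup{d}\mathcal{W}_\beta$, so no separate Burkholder--Davis--Gundy argument is needed), and for case \eqref{cond2} the identical pointwise bound $\beta S-\frac{\sigma_\beta^2}{2}S^2\le\frac{\beta^2}{2\sigma_\beta^2}$, which the paper writes as a completed square in \eqref{28}. The only genuine divergence is in case \eqref{cond1}: after the common Jensen step $\frac{1}{t}\int_0^tS^2\ge\big(\frac{1}{t}\int_0^tS\big)^2$, the paper substitutes the exact balance identity \eqref{S(t)}, $\frac{1}{t}\int_0^tS=\frac{A}{\mu_1}-\big(\frac{\mu_2}{\mu_1}+\frac{\delta\mu_3}{\mu_1(\mu_3+k)}\big)\frac{1}{t}\int_0^tI+\Phi_1(t)$, expands the square, and uses $\sigma_\beta^2\le\frac{\mu_1\beta}{A}$ to make the coefficient of $\frac{1}{t}\int_0^tI$ nonpositive so that those terms (and the extra remainder $\Phi_3$) can be discarded; you instead keep only the one-sided bound $\limsup_t\frac{1}{t}\int_0^tS\le\frac{A}{\mu_1}$ via $S\le N\le X$ and the time-average of $X$, and use monotonicity of $f(x)=\beta x-\frac{\sigma_\beta^2}{2}x^2$ on $[0,\beta/\sigma_\beta^2]$, the hypothesis $\sigma_\beta^2\le\frac{\mu_1\beta}{A}$ serving exactly to place $A/\mu_1$ on the increasing branch (your $\limsup$-versus-$\lim$ worry is handled by continuity of $f$ and, in the boundary case $A/\mu_1=\beta/\sigma_\beta^2$, by the global maximum of $f$). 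Your route is leaner, since it dispenses with \eqref{S(t)}, $\Phi_1$ and $\Phi_3$; the paper's route gives the same constant while keeping the explicit negative term in $\frac{1}{t}\int_0^tI$. One small point of hygiene: you quote Lemma \ref{lemmas} for $\frac{1}{t}\int_0^tX\to\frac{A}{\mu_1}$, but that lemma is stated under $\chi>0$, which is not a hypothesis of the theorem; either observe that this first conclusion only uses Lemma \ref{lem1m} (its proof never invokes $\chi>0$), or derive $\limsup_t\frac{1}{t}\int_0^tS\le\frac{A}{\mu_1}$ directly by integrating the equation for $X$, so that no condition beyond \eqref{cond1} is smuggled in.
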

\begin{proof}
By Itô’s formula for all $t \geq0$, we have
\begin{align}
\text{d}\ln I(t)&=\Bigg(\beta S(t)-(\mu_2+\delta+\gamma)-\frac{\sigma_2^2}{2}-\frac{\sigma_{\beta}^2}{2}S^2(t)-\int_Z \eta_2(u)-\ln(1+\eta_2(u))\nu(\textup{d}u)\Bigg)\textup{d}t\nonumber\\&\;\;\;+\sigma_2\textup{d}\mathcal{W}_2(t)+\int_Z \ln(1+\eta_2(u))\widetilde{\mathcal{N}}(\textup{d}t,\textup{d}u)+\sigma_{\beta} S(t)\textup{d}\mathcal{W}_{\beta}(t).
\label{25}
\end{align}
Integrating (\ref{25}) from $0$ to $t$ and then dividing $t$ on both sides, we get
\begin{align}
\frac{\ln I(t)}{t}= \frac{\beta}{t}\int^t_0S(s)\textup{d}s-\Big(\mu_2+\delta+\gamma+\frac{\sigma^2_2}{2}\Big)-\int_Z \eta_2(u)-\ln(1+\eta_2(u))\nu(\textup{d}u)-\frac{\sigma_{\beta}^2}{2t}\int^t_0S^2(s)\textup{d}s+\Phi_2(t),
\label{28}
\end{align}
where
\begin{align*}
\Phi_2(t)=\frac{\sigma_{\beta}}{t}\int^t_0 S(s)\textup{d}\mathcal{W}_{\beta}(s)-\frac{\sigma_2W_2(t)}{t}+\frac{1}{t}\int^t_0\int_Z \ln(1+\eta_2(u))\widetilde{\mathcal{N}}(\textup{d}s,\textup{d}u)-\frac{\ln I(0)}{t}.
\end{align*}
Obviously, we know that
\begin{align*}
\frac{1}{t}\int^t_0S^2(s)\textup{d}s\geq \Big(\frac{1}{t}\int^t_0S(s)\textup{d}s\Big)^2.
\end{align*}
Therefore, from (\ref{S(t)}), we derive
\begin{align*}
\frac{\ln I(t)}{t}&\leq \frac{\beta}{t}\int^t_0S(s)\textup{d}s-\Big(\mu_2+\delta+\gamma+\frac{\sigma^2_2}{2}\Big)-\int_Z \eta_2(u)-\ln(1+\eta_2(u))\nu(\textup{d}u)-\frac{\sigma_{\beta}^2}{2}\Big(\frac{1}{t}\int^t_0S(s)\textup{d}s\Big)^2+\Phi_2(t)\\&= \beta \Bigg(\frac{A}{\mu_1}-\frac{1}{t}\Bigg(\frac{\mu_2}{\mu_1}+\frac{\delta \mu_3}{\mu_1(\mu_3+k)}\Bigg)\int^t_0I(s)\textup{d}s+\phi_1(t)\Bigg)-\Big(\mu_2+\delta+\gamma+\frac{\sigma^2_2}{2}\Big)-\int_Z \eta_2(u)-\ln(1+\eta_2(u))\nu(\textup{d}u)\\&\;\;\; -\frac{\sigma_{\beta}^2}{2}\Bigg(\frac{A}{\mu_1}-\frac{1}{t}\Bigg(\frac{\mu_2}{\mu_1}+\frac{\delta \mu_3}{\mu_1(\mu_3+k)}\Bigg)\int^t_0I(s)\textup{d}s+\phi_1(t)\Bigg)^2+\Phi_2(t).
\end{align*}
Hence, one can see that
\begin{align}
\frac{\ln I(t)}{t}&\leq\frac{\beta A}{\mu_1}-\Big(\mu_2+\delta+\gamma+\frac{\sigma^2_2}{2}\Big)-\int_Z \eta_2(u)-\ln(1+\eta_2(u))\nu(\textup{d}u)-\frac{A^2\sigma_{\beta}^2}{2\mu_1^2}\nonumber\\&\;\;\;-\Bigg(\frac{\mu_2}{\mu_1}+\frac{\delta \mu_3}{\mu_1(\mu_3+k)}\Bigg)\bigg(\beta-\frac{A\sigma_{\beta}^2}{\mu_1}\bigg)\frac{1}{t}\int^t_0I(s)\textup{d}s\nonumber\\&\;\;\;
-\frac{\sigma_{\beta}^2}{2t^2}\Bigg(\bigg(\frac{\mu_2}{\mu_1}+\frac{\delta \mu_3}{\mu_1(\mu_3+k)}\bigg)\int^t_0I(s)\textup{d}s\Bigg)^2+\Phi_2(t)+\Phi_3(t),
\label{32}
\end{align}
where
\begin{align*}
\Phi_3(t)=\beta\Phi_1(t)-\frac{\sigma^2_{\beta}}{2}\Phi^2_1(t)-\frac{\sigma_{\beta}^2A\Phi_1(t)}{\mu_1}+\sigma_{\beta}^2\Phi_1(t)\bigg(\frac{\mu_2}{\mu_1}+\frac{\delta \mu_3}{\mu_1(\mu_3+k)}\bigg)\int^t_0I(s)\textup{d}s.
\end{align*}
Based on Lemma \ref{lem1m}, one has
\begin{align*}
\underset{t\to \infty}{\lim }\frac{\Phi_2(t)}{t}=\underset{t\to \infty}{\lim }\frac{\Phi_3(t)}{t}=0\hspace*{0.3cm}\mbox{a.s.}
\end{align*}
Taking the superior limit on both sides of (\ref{32}), then by condition (\ref{cond1}), we arrive at
\begin{align*}
\underset{t\to \infty}{\lim \sup}\frac{\ln I(t)}{t}&\leq \Big(\mu_2+\delta+\gamma+\frac{\sigma^2_2}{2}\Big)\Big(\hat{\mathcal{R}}^s_0-1\Big)<0\hspace*{0.3cm}\mbox{a.s.}
\end{align*}
Now, from (\ref{28}), we have
\begin{align*}
\frac{\ln I(t)}{t}&=\frac{\beta}{t}\int^t_0S(s)\textup{d}s-\Big(\mu_2+\delta+\gamma+\frac{\sigma^2_2}{2}\Big)-\int_Z \eta_2(u)-\ln(1+\eta_2(u))\nu(\textup{d}u)-\frac{\sigma_{\beta}^2}{2t}\int^t_0S^2(s)\textup{d}s+\Phi_2(t)\\&=\frac{\beta^2}{2\sigma_{\beta}^2}-\Big(\mu_2+\delta+\gamma+\frac{\sigma^2_2}{2}\Big)-\int_Z \eta_2(u)-\ln(1+\eta_2(u))\nu(\textup{d}u)-\frac{\sigma_{\beta}^2}{2}\frac{1}{t}\int^t_0\bigg(S(s)\textup{d}s-\frac{\beta}{\sigma_{\beta}^2}\bigg)^2ds+\Phi_2(t)\\&\leq 
\frac{\beta^2}{2\sigma_{\beta}^2}-\Big(\mu_2+\delta+\gamma+\frac{\sigma^2_2}{2}\Big)-\int_Z \eta_2(u)-\ln(1+\eta_2(u))\nu(\textup{d}u))+\Phi_2(t).
\end{align*}
By the large number theorem for martingales, Lemma \ref{lem1m} and the condition (\ref{cond2}), our desired result (\ref{ext}) holds true. This completes the proof.
\end{proof}
\section{Examples}
In this section, we will validate our theoretical results with the help of numerical simulation examples taking parameters from the theoretical data mentioned in the Table \ref{value}. We numerically simulate the solution of system (\ref{s2}) with the initial values $(S(0), I(0), Q(0)) = (0.5, 0.3, 0.1)$. The unit of time is one day.
\begin{figure}[!htb]\centering
\begin{center}$
\begin{array}{cc}
\includegraphics[width=3.5in]{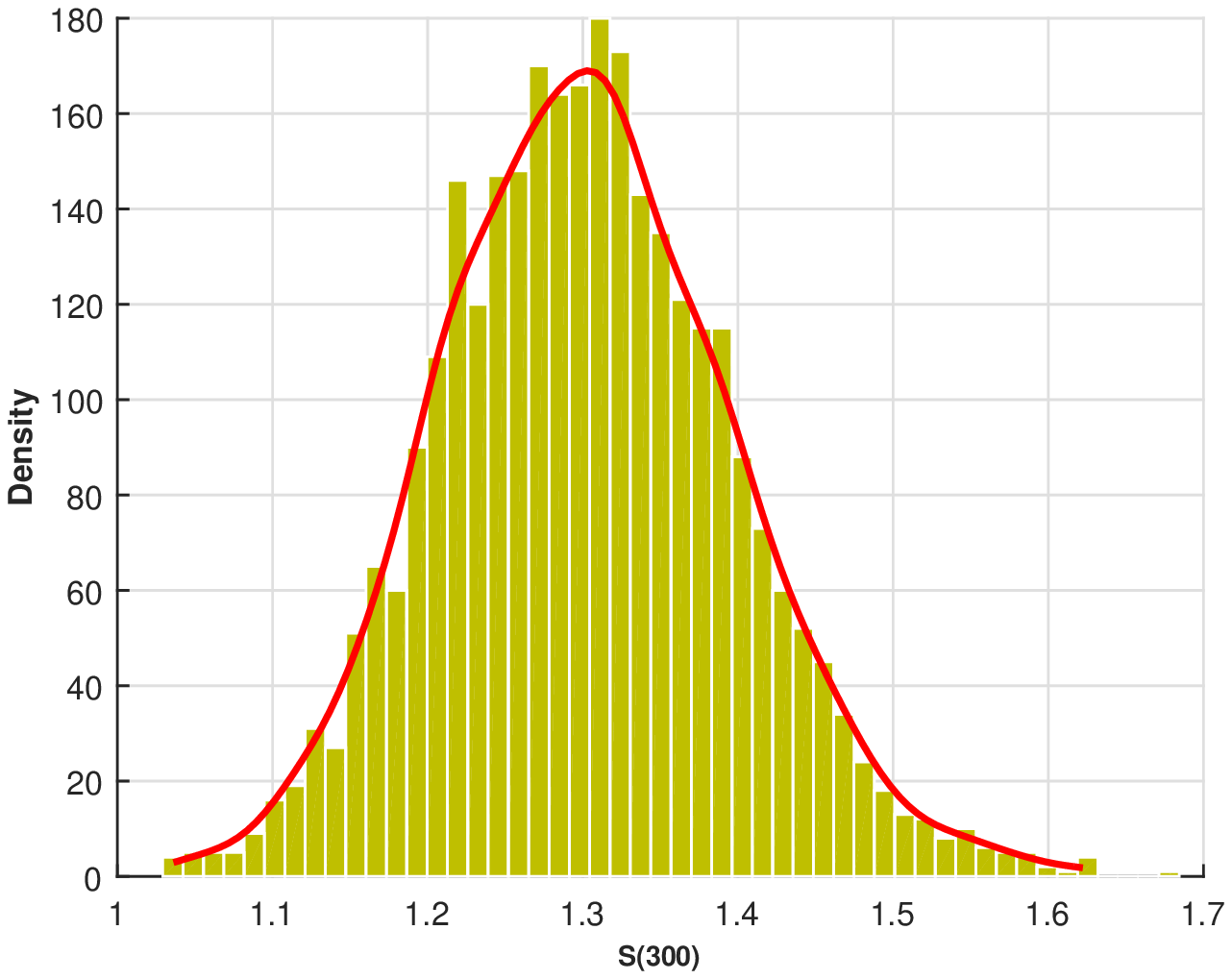} &
\includegraphics[width=3.5in]{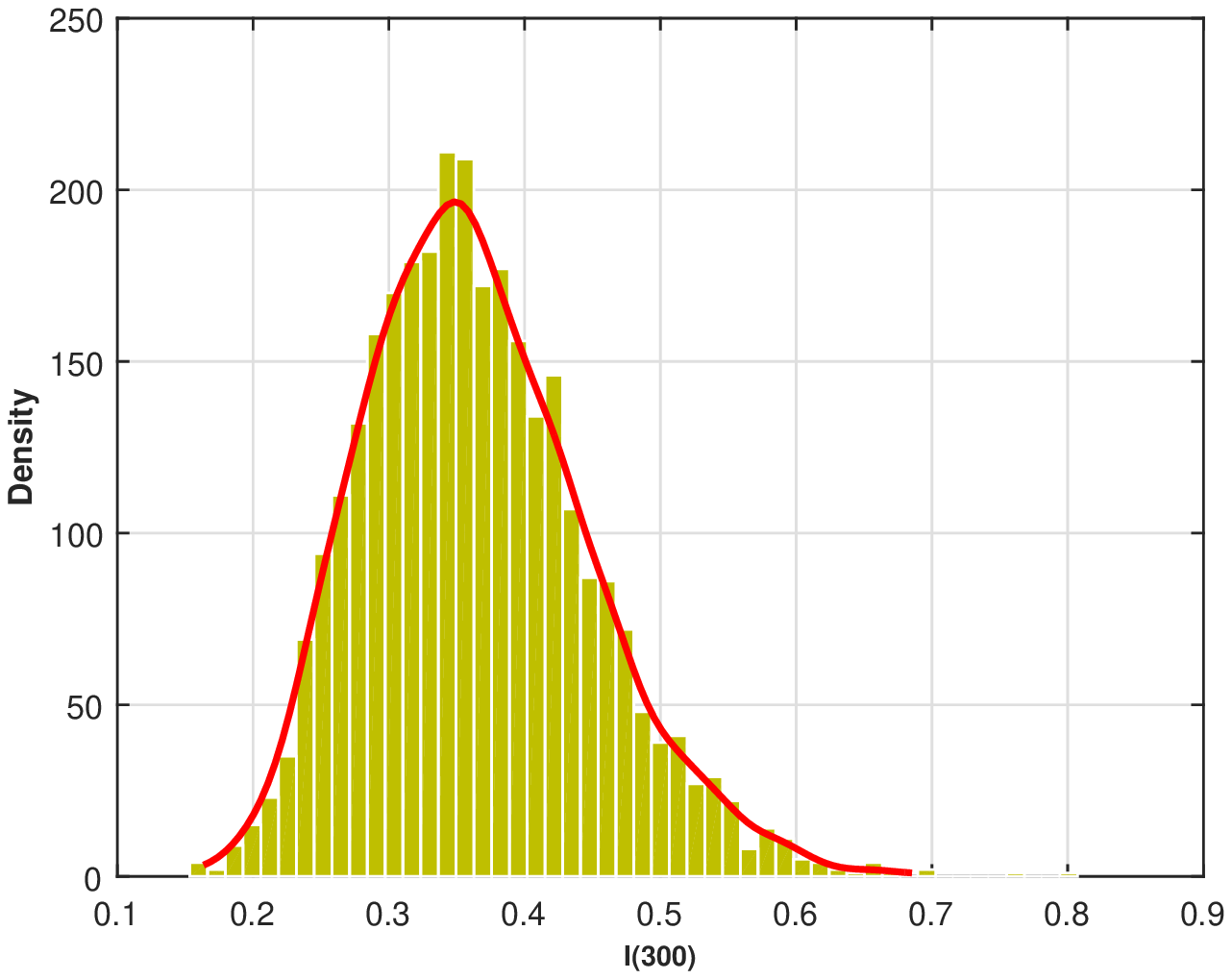}
\end{array}$
$\begin{array}{c}
\includegraphics[width=3.5in]{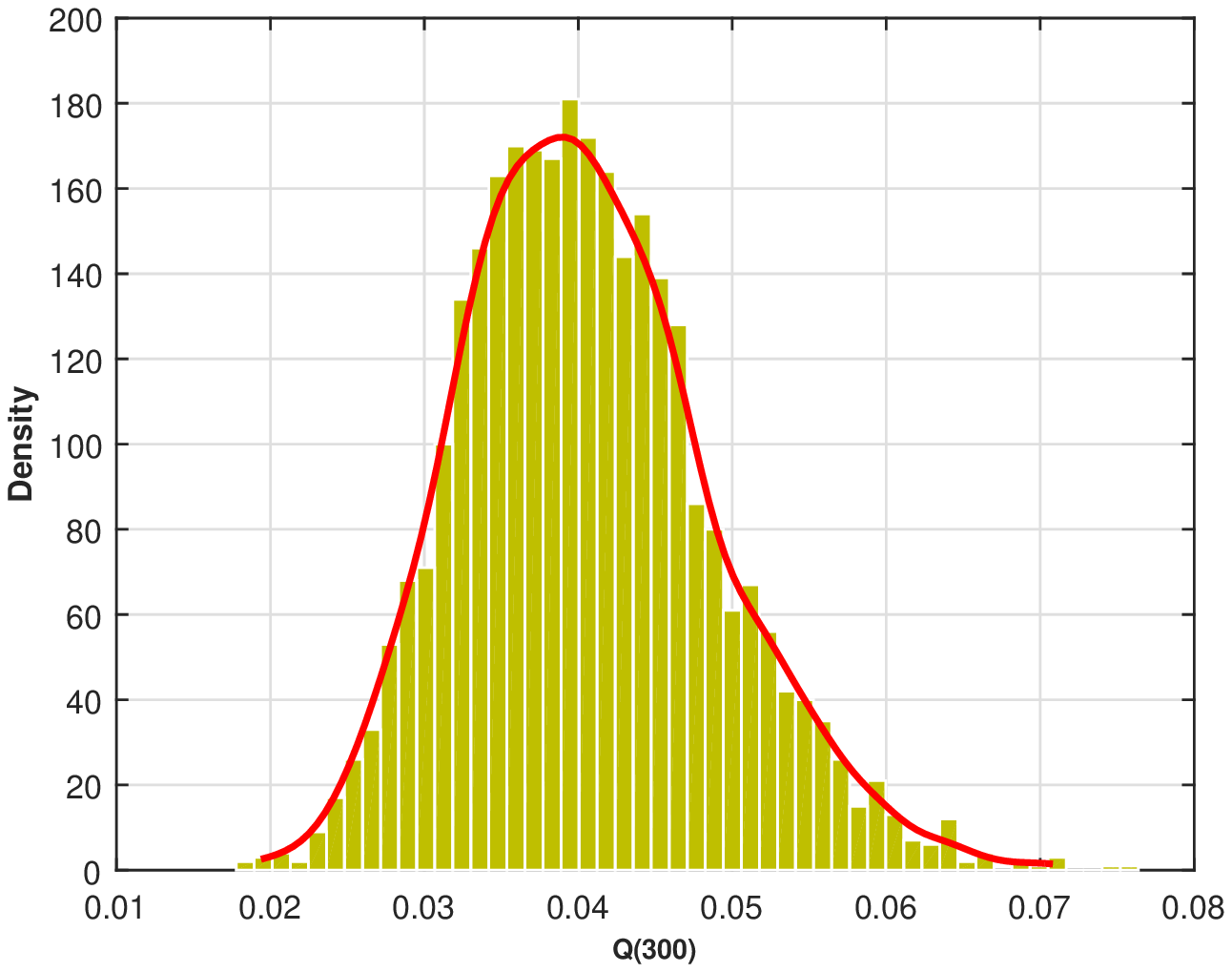}
\end{array}$
\end{center}
\caption{Histogram of the probability density function for $S$, $I$, and $Q$ population at $t=300$ for the stochastic model (\ref{s2}), the smoothed curves are the probability density functions of $S(t)$, $I(t)$ and $Q(t)$, respectively.}
\label{fig1}
\end{figure}
\begin{figure}[!htb]\centering
\begin{center}
$\begin{array}{c}
\includegraphics[width=3.5in]{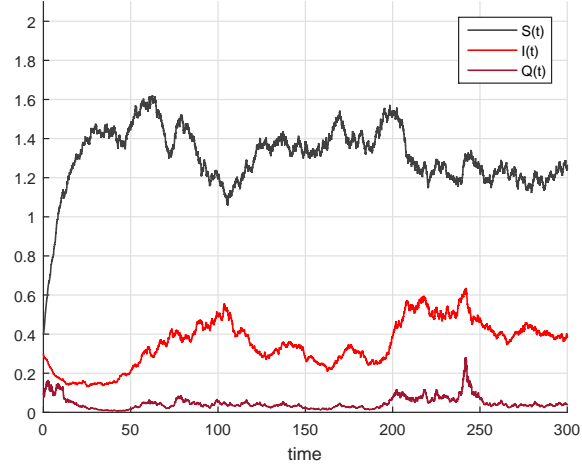}
\end{array}$
\end{center}
\caption{The paths of $S(t)$, $I(t)$ and $Q(t)$ for the stochastic model (\ref{s2}) with initial values $(S(0), I(0), Q(0)) =(0.5, 0.3, 0.1)$.}
\label{fig21}
\end{figure}
\begin{figure*}[!htb]
\subcaptionbox{}
{\includegraphics[width=3.5in]{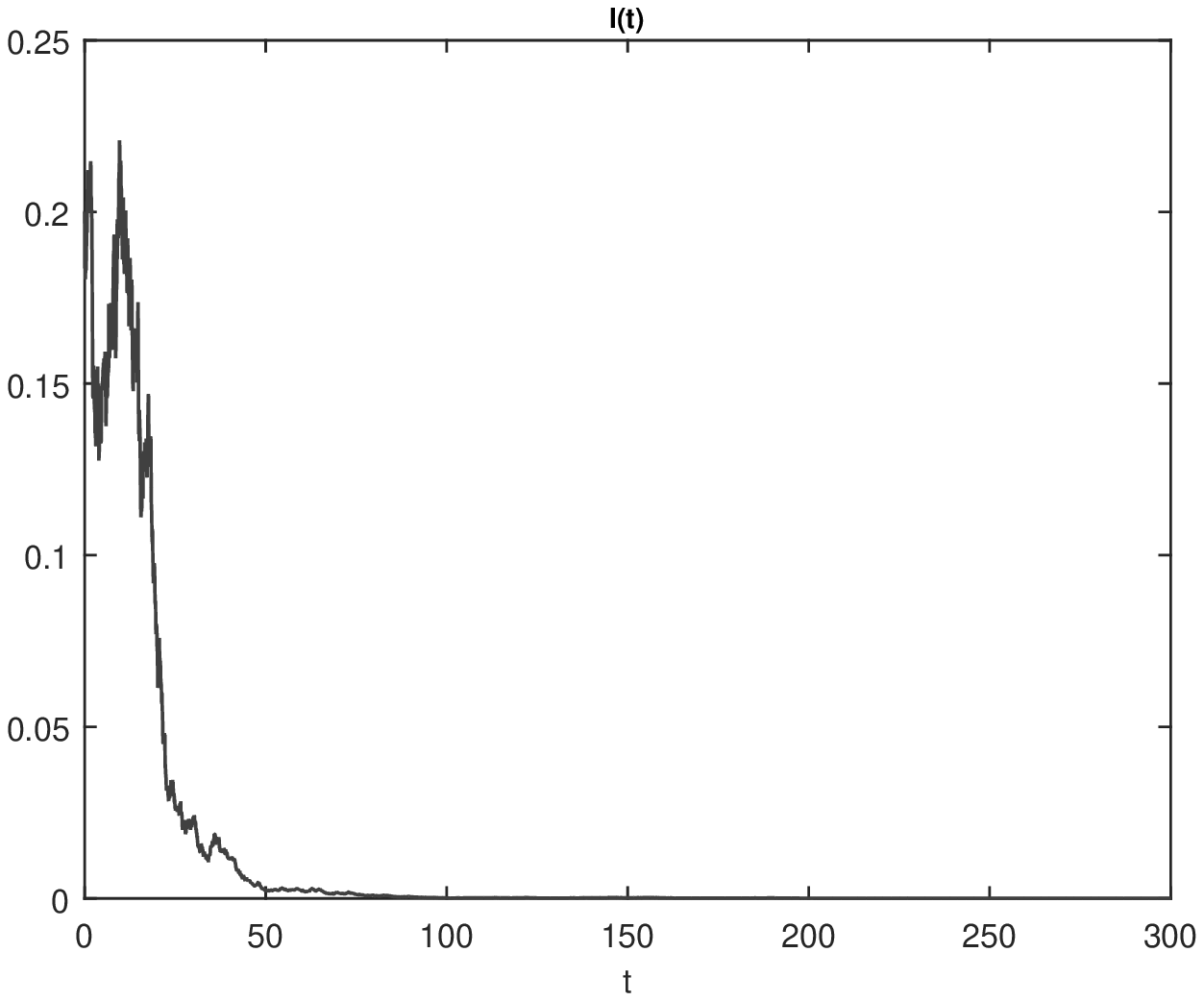} } 
\subcaptionbox{}
{\includegraphics[width=3.5in]{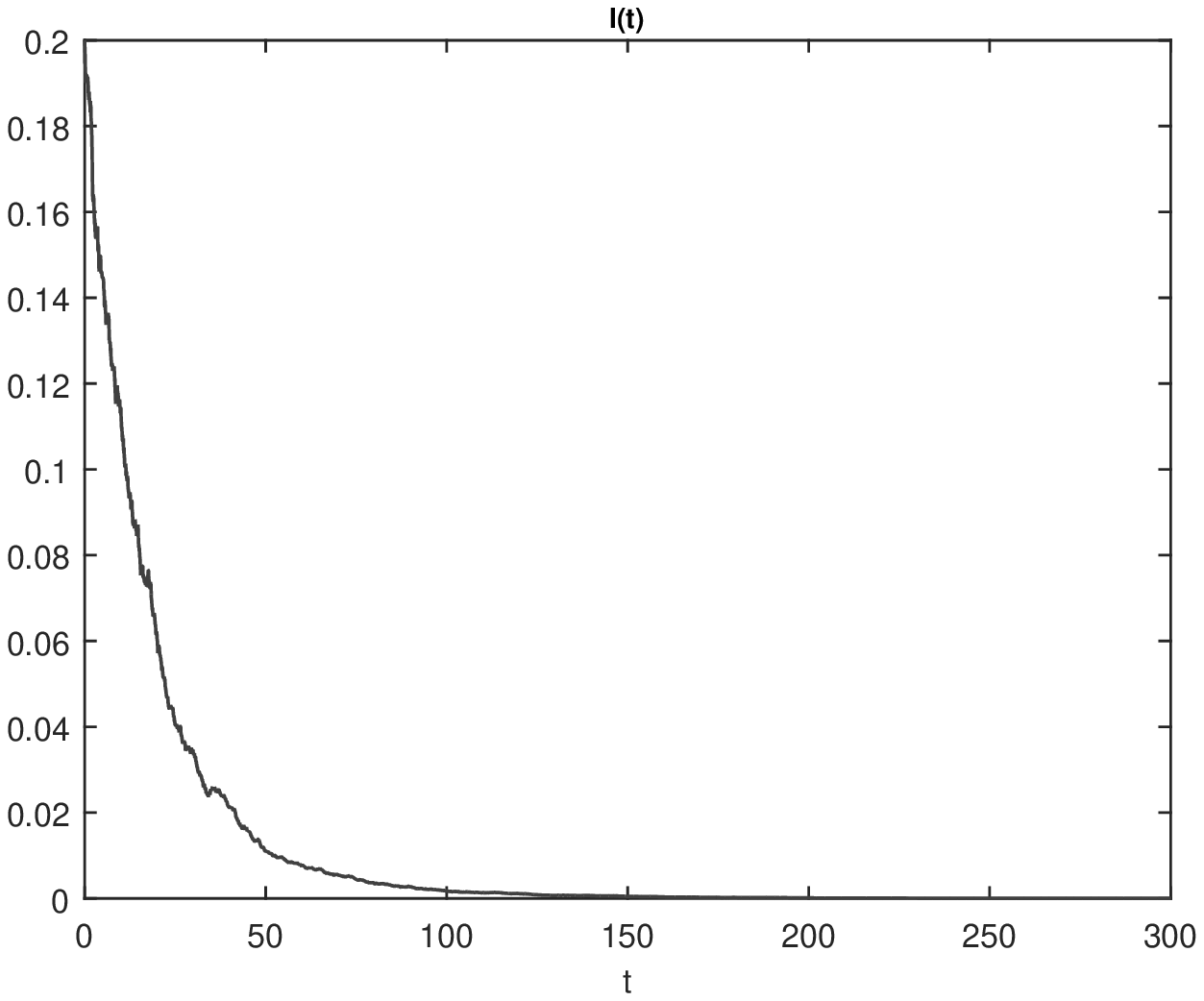}} 
\caption{The numerical simulation of $I(t)$ in the system (\ref{s2}).}
\label{fig2}
\end{figure*}
\begin{center}
\begin{tabular}{llll}
\hline
Parameters  \hspace*{0.5cm}&Description  \hspace*{0.5cm}&Value  \\
\hline
$A$ & The recruitment rate & 0.1 \\
$\mu_1$ & The natural mortality rate & 0.05\\
$\mu_2$ & The mortality rate of $I$ & 0.09\\
$\mu_3$ & The mortality rate of $Q$ & 0.052\\
$\beta$ & The transmission rate &0.075 \\
$\delta$ & The isolation rate & 0.03\\
$\gamma$ & The recovered rate of $I$ &0.01 \\
$k$ &The recovered rate of $Q$ &0.04 \\
\hline
\end{tabular}
\captionof{table}{Some theoretical parameter values of the model (\ref{s2}).}
\label{value}
\end{center}
\begin{ex}
We have chosen the stochastic fluctuations intensities $\sigma_1=0.01$, $\sigma_2=0.03$, $\sigma_3=0.07$ and $\sigma_{\beta}=0.02$. Furthermore, we assume that $\eta_1(u)=0.01$, $\eta_2(u)=0.02$, $\eta_3(u)=0.05$, $Z=(0,\infty)$ and $\nu(Z)=1$. Then, $\mathcal{R}_0^s=1.1756
>1$. From Figure \ref{fig1}, we show the existence of the unique stationary distributions for $S(t)$, $I(t)$ and $Q(t)$ of the model (\ref{s2}) at $t = 300$, where the smooth curves are the probability density functions of $S(t)$, $I(t)$ and $Q(t)$, respectively. It can be obviously observed that the solution of the stochastic model (\ref{s2}) persists in the mean (see Figure \ref{fig21}). 
\end{ex}
\begin{ex}
Now, we choose the white noise intensities $\sigma_2=0.12$ and $\sigma_{\beta}=0.1$ to ensure that the condition (\ref{cond2}) of theorem (\ref{thm2}) is satisfied. We can conclude that for any initial value, $I(t)$ obeys
\begin{align*}
\underset{t\to \infty}{\lim \sup}\frac{1}{t}\ln\frac{I(t)}{I(0)}&\leq \frac{\beta^2}{2\sigma_{\beta}^2}-\Big(\mu_2+\delta+\gamma+\frac{\sigma^2_2}{2}\Big)-\int_Z \eta_2(u)-\ln(1+\eta_2(u))\nu(\textup{d}u),\\&=-0.1374<0 \;\;\; \mbox{a.s.}
\end{align*}
That is, $I(t)$ will tend to zero exponentially with probability one (see Figure \ref{fig2} (a)). To verify that the condition (\ref{cond1}) is satisfied, we change $\sigma_2$ to $0.01$, $\sigma_{\beta}$ to $0.02$ and $\beta$ to $0.05$ and keep other parameters unchanged. Then we have
\begin{align*}
\mathcal{\hat{R}}^s_0&=\Big(\mu_2+\delta+\gamma+\frac{\sigma^2_2}{2}\Big)^{-1}\bigg(\frac{\beta A}{\mu_1}-\frac{\sigma_{\beta}^2A^2}{2\mu_1^2}-\int_Z\eta_2(u)-\ln(1+\eta_2(u))\nu(\textup{d}u)\bigg)= 0.7650<1,
\end{align*}
and
\begin{align*}
\sigma_{\beta}^2-\frac{\mu_1\beta}{A}=-0.0249<0.
\end{align*}
Therefore, the condition (\ref{cond1}) of Theorem \ref{thm2} is satisfied. We can conclude that for any initial value, $I(t)$ obeys
\begin{align*}
&\underset{t\to \infty}{\lim \sup}\frac{1}{t}\ln\frac{I(t)}{I(0)}\leq \Big(\mu_2+\delta+\gamma+\frac{\sigma^2_2}{2}\Big)\Big(\hat{\mathcal{R}}^s_0-1\Big)=-0.0306<0 \;\;\; \mbox{a.s.}
\end{align*}
That is, $I(t)$ will tend to zero exponentially with probability one (see Figure \ref{fig2} (b)).
\end{ex}
\section*{Conclusion}
In this study, we proposed a new version of a perturbed SIS epidemiological model with a quarantine strategy. This model simultaneously takes into account random transmission and the effects of jumps. We have addressed possible scenarios of the pandemic spread during unforeseen climate changes or environmental shocks. Compared with the existing literature, the novelty of our study manifested in new analysis techniques and improvements which are summarized in the following items:\\
\begin{itemize}
\item[$\bullet$] Our paper is distinguished from previous works \cite{57,166,lev1,lev2,lev3} by improving the majorization of the following quantity
\begin{align*}
\int_ZN^{np}(t)\left[\Big(1+\frac{\tilde{X}}{N}\Big)^{np}-1-np\frac{\tilde{X}}{N}\right]\nu(\textup{d}u),
\end{align*}
which raises the optimality of our calculus and results.  
\item[$\bullet$] Our results in Lemmas \ref{lem22} and \ref{lem1m} provide an extended and generalized version of classical lemmas 3.3 and 3.4 presented in \cite{166} which are widely used in the literature.
\item[$\bullet$] Our study provides an improved threshold 
\begin{align*}
\mathcal{R}^s_0&=\Big(\mu_2+\delta+\gamma+\frac{\sigma_2^2}{2}\Big)^{-1}\left(\frac{\beta A}{\mu_1}-\frac{A^2\sigma_{\beta}^2}{\mu_1\chi}-\int_Z \eta_2(u)-\ln(1+\eta_2(u))\nu(\textup{d}u)\right),
\end{align*}
by taking into consideration the Remark \ref{remm}. This parameter is a sufficient condition for the existence of a unique ergodic stationary distribution and persistence of the disease under some assumptions. The last two asymptotic properties are proven in Theorems \ref{thm1} and \ref{persis}, by using a new approach based on Lemma \ref{lemmas} and the mutually exclusive possibilities lemma \ref{poss}. 
\item[$\bullet$] Our study offers an alternative method to the gap mentioned in (Theorem 2.2, \cite{58}). Without using the explicit formula of the distribution stationary  $\mu(\cdot)$ of $X$ (which still up to now unknown), we gave the expression of the ergodicity and persistence threshold.  
\item[$\bullet$] For the case of non-persistence, in Theorem \ref{thm2}, we proved that the following parameter
\begin{align*}
\mathcal{\hat{R}}^s_0&=\Big(\mu_2+\delta+\gamma+\frac{\sigma^2_2}{2}\Big)^{-1}\bigg(\frac{\beta A}{\mu_1}-\frac{A^2\sigma_{\beta}^2}{2\mu_1^2}-\int_Z\eta_2(u)-\ln(1+\eta_2(u))\nu(\textup{d}u)\bigg),
\end{align*}
is a sufficient conditions for the disappearance of the disease. 
\end{itemize}
\vspace{0.3cm}
Eventually, we point out that the obtained results extend and generalize many previous works (for example, \cite {sis11, sis12, sis13, sis14, sis2}), by analyzing the dynamics of the SIQS epidemic models with two disturbances. We believe that our article can be a rich basis for future studies. 
%
\bibliographystyle{plain}
\bibliography{double}

\begin{thebibliography}{10}

\bibitem{9}
L.J.S. Allen.
\newblock An introduction to stochastic epidemic models.
\newblock {\em Mathematical Epidemiology}, 144:81--130, 2008.

\bibitem{2}
E.~Beretta, T.~Hara, and W.~Ma.
\newblock Global asymptotic stability of an {SIR} epidemic model with
  distributed time delay.
\newblock {\em Nonlinear Analysis: Theory, Methods and Applications},
  47:4107--4115, 2001.

\bibitem{sis2}
Y.~Chen, B.~Wen, and Z.~Teng.
\newblock The global dynamics for a stochastic {SIS} epidemic model with
  isolation.
\newblock {\em Physica A}, 492:1604--1624, 2018.

\bibitem{lev2}
Y.~Cheng, M.~Li, and F.~Zhang.
\newblock A dynamics stochastic model with {HIV} infection of {CD4} {T} cells
  driven by {L}evy noise.
\newblock {\em Chaos, Solitons and Fractals}, 129:62--70, 2019.

\bibitem{lev1}
Y.~Cheng, F.~Zhang, and M.~Zhao.
\newblock A stochastic model of {HIV} infection incorporating combined therapy
  of haart driven by {L}evy jumps.
\newblock {\em Advance in Difference Equations}, 321, 2019.

\bibitem{lev3}
M.~Gao, D.~Jiang, T.~Hayat, and A.~Alsaedi.
\newblock Threshold behavior of a stochastic lotka volterra food chain
  chemostat model with jumps.
\newblock {\em Chaos, Solitons and Fractals}, 523:191--203, 2019.

\bibitem{3}
H.~Guo, M.Y. Li, and Z.~S. Shuai.
\newblock Global stability of the endemic equilibrium of multigroup {SIR}
  epidemic models.
\newblock {\em Canadian Applied Mathematics Quarterly}, 14:259--284, 2006.

\bibitem{sis3}
H.~Herbert, Z.~Ma, and S.~Liao.
\newblock Effects of quarantine in six endemic models for infectious diseases.
\newblock {\em Mathematical Biosciences}, 180:141--160, 2002.

\bibitem{53}
C.~Ji and D.~Jiang.
\newblock Threshold behaviour of a stochastic {SIR} model.
\newblock {\em Applied Mathematical Modelling}, 38:5067--5079, 2014.

\bibitem{54}
C.~Ji, D.~Jiang, and N.~Shi.
\newblock Asymptotic behavior of global positive solution to a stochastic {SIR}
  model.
\newblock {\em Applied Mathematical Modelling}, 45:221--232, 2011.

\bibitem{52}
C.~Ji, D.~Jiang, and N.~Shi.
\newblock The behavior of an {SIR} epidemic model with stochastic perturbation.
\newblock {\em Stochastic analysis and applications}, 30:755--773, 2012.

\bibitem{1}
W.~O. Kermack and A.~G. McKendrick.
\newblock A contribution to the mathematical theory of epidemics.
\newblock {\em Proceedings of The Royal Society A Mathematical Physical and
  Engineering Sciences}, 115(772):700--721, 1927.

\bibitem{Q2}
M.~A. Khan, A.~Atangana, E.~Alzahrani, and Fatmawati.
\newblock The dynamics of {COVID-19} with quarantined and isolation.
\newblock {\em Advances in Difference Equations}, 425, 2020.

\bibitem{12}
R.~Khasminskii.
\newblock Stochastic stability of differential equations.
\newblock {\em A Monographs and Textbooks on Mechanics of Solids and Fluids},
  7, 1980.

\bibitem{16}
D.~Kiouach and Y.~Sabbar.
\newblock Stability and threshold of a stochastic {SIRS} epidemic model with
  vertical transmission and transfer from infectious to susceptible
  individuals.
\newblock {\em Discrete Dynamics in Nature and Society}, (7570296), 2018.

\bibitem{55}
Y.~Lin, D.~Jiang, and P.~Xia.
\newblock Long-time behavior of a stochastic {SIR} model.
\newblock {\em Applied Mathematics and Computation}, 236:1--9, 2014.

\bibitem{mo}
X.~Mao.
\newblock {\em Stochastic Differential Equations and Applications}.
\newblock Horwoodl, Chichester, 1997.

\bibitem{4}
X.~Z. Meng and L.~S. Chen.
\newblock The dynamics of a new {SIR} epidemic model concerning pulse
  vaccination strategy.
\newblock {\em Applied Mathematics and Computation}, 197:528--597, 2008.

\bibitem{5}
M.~Roy and R.~D. Holt.
\newblock Effects of predation on host-pathogen dynamics in {SIR} models.
\newblock {\em Theoretical Population Biology}, 73:319--331, 2008.

\bibitem{sis6}
M.A. Safi and A.B. Gumel.
\newblock Global asymptotic dynamics of a model for quarantine and isolation.
\newblock {\em Discrete and Continuous Dynamical Systems B}, 14:209--231, 2010.

\bibitem{sis9}
M.A. Safi and A.B. Gumel.
\newblock The effect of incidence function on the dynamics of a
  qrarantine/isolation model with time delay.
\newblock {\em Nonlinear Analysis Real World Applications}, 12:215--235, 2011.

\bibitem{10}
L.~Stettner.
\newblock On the existence and uniqueness of invariant measure for
  continuous-time markov processes.
\newblock {\em Technical Report, LCDS, Brown University, province, RI}, pages
  18--86, 1986.

\bibitem{sis7}
C.~Sun and W.~Yang.
\newblock Global results for an {SIRS} model with vaccination and isolation.
\newblock {\em Nonlinear Analysis Real World Applications}, 11:4223--4237,
  2010.

\bibitem{Q1}
B.~Tang, F.~Xia, S.~Tang, N.~L. Bragazzi, Q.~Li, X.~Sun, J.~Liang, Y.~Xiao, and
  J.~Wu.
\newblock The effectiveness of quarantine and isolation determine the trend of
  the {COVID-19} epidemics in the final phase of the current outbreak in china.
\newblock {\em International Journal of Infectious Diseases}, 95:288--293,
  2020.

\bibitem{11}
J.~Tong, Z.~Zhang, and J.~Bao.
\newblock The stationary distribution of the facultative population model with
  a degenerate noise.
\newblock {\em Statistics and Probability Letters}, 83(14):655--664, 2013.

\bibitem{51}
E.~Tornatore, S.~Buccellato, and P.~Vetro.
\newblock Stability of a stochastic {SIR} system.
\newblock {\em Physica A}, 354:111--126, 2005.

\bibitem{sta}
Y.~Wang and D.~Jiang.
\newblock Stationary distribution and extinction of a stochastic viral
  infection model.
\newblock {\em Discrete Dynamics in Nature and Society}, 2017, ID 6027509,
  2017.

\bibitem{sis13}
F.~Wei and F.~Chen.
\newblock Stochastic permanence of an {SIQS} epidemic model with saturated
  incidence and independent random perturbations.
\newblock {\em Physica A}, 453:99--107, 2016.

\bibitem{sis1}
Q.~Yang, D.~Jiang, N.~Shi, and C.~Ji.
\newblock The ergodicity and extinction of stochastically perturbed {SIR} and
  {SEIR} epidemic models with saturated incidence.
\newblock {\em Journal of Mathematical Analysis and Applications},
  388:248--271, 2012.

\bibitem{sis11}
X.~Zhang, H.~Huo, H.~Xiang, and X.~Meng.
\newblock Dynamics of the deterministic and stochastic {SIQS} epidemic model
  with nonlinear incidence.
\newblock {\em Applied Mathematics and Computation}, 243:546--558, 2014.

\bibitem{sis12}
X.~Zhang, H.~Huo, H.~Xiang, Q.~Shi, and D.~Li.
\newblock The threshold of a stochastic {SIQS} epidemic model.
\newblock {\em Physica A}, 482:362--374, 2017.

\bibitem{sis14}
X.~Zhang, Q.~Shi, S.~Ma, H.Huo, and D.~Li.
\newblock Dynamic behavior of a stochastic {SIQS} epidemic model with {L}evy
  jumps.
\newblock {\em Nonlinear Dynamics}, 93:1481--1493, 2018.

\bibitem{56}
X.~Zhang and K.~Wang.
\newblock Stochastic {SIR} model with jumps.
\newblock {\em Applied Mathematics letters}, 826:867--874, 2013.

\bibitem{sharp}
D.~Zhao and S.~Yuan.
\newblock Sharp conditions for the existence of a stationary distribution in
  one classical stochastic chemostat.
\newblock {\em Applied Mathematics and Computation}, 339:199--205, 2018.

\bibitem{58}
D.~Zhao, S.~Yuan, and H.~Liu.
\newblock Stochastic dynamics of the delayed chemostat with {L}evy noises.
\newblock {\em International Journal of Biomathematics}, 12(5), 2019.

\bibitem{166}
Y.~Zhou, S.~Yuan, and D.~Zhao.
\newblock Threshold behavior of a stochastic {SIS} model with {L}evy jumps.
\newblock {\em Applied Mathematics and Computation}, 275:255--267, 2016.

\bibitem{57}
Y.~Zhou and W.~Zhang.
\newblock Threshold of a stochastic {SIR} epidemic model with {L}evy jumps.
\newblock {\em Physica A}, 446:204--2016, 2016.

\end{thebibliography}
\end{document}